\theoremstyle{plain}
\newtheorem{theorem}{Theorem}[section]
\newtheorem{proposition}[theorem]{Proposition}
\newtheorem{lemma}[theorem]{Lemma}
\newtheorem{corollary}[theorem]{Corollary}
\theoremstyle{definition}
\newtheorem{question}[theorem]{Question}
\theoremstyle{remark}
\newtheorem{remark}[theorem]{Remark}
\newcommand{\Lring}{\mathfrak{L}_{\rm ring}}
\newcommand{\Lval}{\mathfrak{L}_{\rm val}}
\newcommand{\URt}{\ref{URt}}
\newcommand\Rfour{\hyperref[R4]{{\rm(R4)}}}
\numberwithin{equation}{section}
\title{On the existential theory of the completions of a global field}
\author{Philip Dittmann and Arno Fehm}
\address{Institut für Algebra, Technische Universität Dresden, 01062 Dresden, Germany}
\curraddr{Department of Mathematics, University of Manchester, Manchester M13 9PL, United Kingdom}
\email{philip.dittmann@manchester.ac.uk}
\address{Institut für Algebra, Technische Universität Dresden, 01062 Dresden, Germany}
\email{arno.fehm@tu-dresden.de}
\begin{document}

\begin{abstract}
 We discuss the common existential theory of all or almost all completions of a global function field.
\end{abstract}

\maketitle

\section{Introduction}

\noindent
The celebrated Hasse--Minkowski local-global principle states that a quadratic form over a global field $K$ has a nontrivial zero in $K$ if and only if it has a nontrivial zero in each completion of $K$.
Given a concrete quadratic form $f\in K[x_1,\dots,x_n]$ one can effectively decide whether $f$ has a nontrivial zero in all completions, 
eventually leading to an algorithm to decide 
whether $f$ has a nontrivial zero in $K$.

For polynomials of higher degree the local-global principle fails, 
and it is conjectured (for $K$ a number field, see e.g.~\cite{Koenigsmann_survey,Poonen_survey})
respectively known (for $K$ a global function field \cite{Pheidas,Videla,Shlapentokh,Eisentraeger}) that
the problem of whether polynomials over $K$ have
a zero in $K$ is undecidable.
In this short note we want to study the local problem, i.e.~deciding whether a polynomial over $K$ has a zero in each completion of $K$.

For number fields $K$, even much more is known
by the seminal work of Ax:

\begin{theorem}[\cite{Ax}]\label{thm:intro_Ax}
The (common) $\mathfrak{L}_{\rm val}$-theory of all $\mathbb{Q}_p$
and the $\mathfrak{L}_{\rm val}$-theory of almost all $\mathbb{Q}_p$ {are both} decidable.
\end{theorem}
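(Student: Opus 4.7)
The plan is to apply the Ax--Kochen--Ershov (AKE) transfer principle to reduce the $\mathfrak{L}_{\rm val}$-theory of each $\mathbb{Q}_p$ to decidable theories of its residue field and its value group. Each $\mathbb{Q}_p$ is complete (hence henselian) and discretely valued, of mixed characteristic $(0,p)$, with residue field $\mathbb{F}_p$, value group $\mathbb{Z}$, and uniformiser $p$; in particular $v(p)$ is the minimal positive element of the value group, so any two such fields are in the unramified mixed-characteristic AKE setup.

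First I would treat the theory of almost all $\mathbb{Q}_p$. By Łoś's theorem, a sentence $\varphi$ belongs to this theory iff it holds in every ultraproduct $\prod_p \mathbb{Q}_p/\mathcal{U}$ with $\mathcal{U}$ non-principal. Such an ultraproduct is a henselian valued field of equi-characteristic zero whose residue field is the corresponding ultraproduct $\prod_p \mathbb{F}_p/\mathcal{U}$, a pseudo-finite field of characteristic $0$, and whose value group is an ultrapower of $\mathbb{Z}$, hence a $\mathbb{Z}$-group. By the (equi-characteristic zero) AKE principle, the $\mathfrak{L}_{\rm val}$-theory of the ultraproduct is determined by the theories of these two ingredients. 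Presburger's theorem decides the theory of $\mathbb{Z}$-groups, and Ax's theorem on the theory of finite fields axiomatises and decides the theory of pseudo-finite fields, effectively reducing sentences to statements about the procyclic absolute Galois group $\widehat{\mathbb{Z}}$ and point-counts on varieties (controlled by the Lang--Weil estimates). Putting these pieces together with the AKE reduction yields a decision procedure for the theory of almost all $\mathbb{Q}_p$.

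For the common theory of all $\mathbb{Q}_p$, given a sentence $\varphi$ I would first decide membership in the theory of almost all $\mathbb{Q}_p$ by the procedure above; if $\varphi$ fails there, it fails in the common theory. Otherwise I would run a parallel search that on one side enumerates, via the AKE axiomatisation, proofs that $\varphi$ holds in $\mathbb{Q}_p$ for all $p$ outside a putative finite set, and on the other side enumerates primes $p$ for which $\mathbb{Q}_p \not\models \varphi$; this must terminate with a finite exceptional set $S$. For each $p \in S$, the individual theory of $\mathbb{Q}_p$ is decidable via AKE applied to the (trivially decidable) theories of $\mathbb{F}_p$ and $\mathbb{Z}$, so $\varphi$ can be checked one prime at a time.

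The main obstacle is Ax's decidability theorem for the theory of pseudo-finite fields: producing an effective procedure there requires a genuine quantifier analysis, invoking Lang--Weil to control solution sets and the structure of $\widehat{\mathbb{Z}}$ to handle the Galois-theoretic content. The AKE transfer and Presburger arithmetic are comparatively soft once that ingredient is available.
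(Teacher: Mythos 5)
The paper does not actually prove this theorem; it is quoted as background from Ax's 1968 paper, so there is no in-paper argument to compare against. That said, your sketch is essentially the standard route, and the main ingredients (Ax--Kochen/Ershov transfer in the relevant settings, Presburger for $\mathbb{Z}$-groups, Ax's decidability of the theory of finite/pseudofinite fields) are exactly what is needed. Two points deserve sharpening. First, the residue fields of the nonprincipal ultraproducts $\prod_p \mathbb{Q}_p/\mathcal{U}$ are ultraproducts of the \emph{prime} fields $\mathbb{F}_p$, so the theory one has to control is that of almost all $\mathbb{F}_p$ (equivalently, pseudofinite fields of characteristic zero); this coincides with what you write, but it is worth noting that this identification is itself part of Ax's work and not an immediate consequence of ``pseudofinite.'' Second, and more substantively, the ``parallel search'' for the common theory of all $\mathbb{Q}_p$ is stated too loosely to constitute a decision procedure: the axioms of the theory of almost all $\mathbb{Q}_p$ (equicharacteristic zero, pseudofinite residue field) hold in \emph{no} individual $\mathbb{Q}_p$, so a proof of $\varphi$ from that theory does not, by itself, tell you that $\varphi$ holds in $\mathbb{Q}_p$ for large $p$, and merely enumerating primes where $\varphi$ fails cannot detect termination. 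The standard fix --- which the authors of the present paper themselves carry out in the analogous Corollary~\ref{cor:fullparam} --- is to observe that each axiom of pseudofinite fields used in a given proof holds in $\mathbb{F}_q$ for all $q$ above an \emph{effectively computable} bound (effective Lang--Weil), so the proof itself yields a computable finite exceptional set of primes, which are then checked one by one via the uniform decidability of $\mathrm{Th}(\mathbb{Q}_p)$. Alternatively one may give a single recursive axiomatisation of the theory of all $\mathbb{Q}_p$ (roughly: henselian, value group a $\mathbb{Z}$-group, and for each prime $p$ the schema ``if the residue characteristic is $p$ then the residue field is $\mathbb{F}_p$ and $v(p)=1$,'' plus the pseudofinite axioms in residue characteristic zero), which makes the theory computably enumerable and hence, together with co-enumerability via the uniform decidability of each $\mathbb{Q}_p$, decidable. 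You gesture at the Lang--Weil input in your final paragraph, but it should be invoked at this precise point to close the gap.
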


Here, $\mathfrak{L}_{\rm val}$ is the language of valued fields,
and by the theory of almost all $\mathbb{Q}_p$ we mean
the set of sentences true in all but finitely many $\mathbb{Q}_p$
(alternatively, here and in the following, we could interpret ``almost all'' in the sense of Dirichlet density $1$,
see Remark \ref{rem:density}).
Since also the $\mathfrak{L}_{\rm ring}$-theories of $\mathbb{R}$ and $\mathbb{C}$ are decidable by the work of Tarski \cite{Tarski}, and every number field can be interpreted in $\mathbb{Q}$, this leads to:

\begin{corollary}\label{cor:intro_Ax}
Let $K$ be a number field.
The $\mathfrak{L}_{\rm ring}$-theory of all completions of $K$
and the $\mathfrak{L}_{\rm ring}$-theory of almost all completions of $K$ are both decidable.
In particular, it is decidable whether a given polynomial over $K$ has a zero in all respectively almost all completions of $K$.\footnote{For a sketch see Remark \ref{rem:finite_ext}.}
\end{corollary}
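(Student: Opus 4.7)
The plan is to combine Theorem~\ref{thm:intro_Ax} with Tarski's theorem via a uniform interpretation of the non-archimedean completions of $K$ in the family $(\mathbb{Q}_p)_p$. First, I separate archimedean and non-archimedean places. The archimedean completions of $K$ are only finitely many, each isomorphic to $\mathbb{R}$ or $\mathbb{C}$, so their contribution to the theories under consideration is decidable directly by Tarski's decidability of $\mathrm{Th}(\mathbb{R})$ and $\mathrm{Th}(\mathbb{C})$: for the theory of almost all completions these finitely many places are irrelevant; for the theory of all completions I test the given sentence in $\mathbb{R}$ and $\mathbb{C}$ as needed. It therefore suffices to show that the $\Lring$-theory of all, respectively almost all, non-archimedean completions of $K$ is decidable.

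To this end, fix a primitive element of $K/\mathbb{Q}$ with monic minimal polynomial $f\in\mathbb{Z}[X]$ of degree $n=[K:\mathbb{Q}]$. For each rational prime $p$, the factorization $f=\prod_i g_i$ in $\mathbb{Q}_p[X]$ into monic irreducibles corresponds bijectively to the primes $\mathfrak{p}\mid p$ of $K$ via $K_\mathfrak{p}\cong\mathbb{Q}_p[X]/(g_i)$. Given an $\Lring$-sentence $\varphi$, I translate it to a single $\Lring$-sentence $\varphi^*$ (independent of $p$) which asserts, over an arbitrary field $F$, that for every monic divisor $g\in F[X]$ of $f$ of degree at most $n$ that is irreducible over $F$, the quotient $F[X]/(g)$ satisfies $\varphi$. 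This is $\Lring$-expressible: one quantifies over the $n+1$ coefficients of $g$, encodes divisibility and irreducibility as $\Lring$-conditions on those coefficients (using the degree bound $n$), and interprets elements of $F[X]/(g)$ as $n$-tuples over $F$ with addition, multiplication and equality computed modulo $g$.

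By construction, $\mathbb{Q}_p\models\varphi^*$ if and only if $K_\mathfrak{p}\models\varphi$ for every $\mathfrak{p}\mid p$. Hence $\varphi$ holds in all (respectively almost all) non-archimedean completions of $K$ if and only if $\varphi^*$ holds in all (respectively almost all) $\mathbb{Q}_p$, which by Theorem~\ref{thm:intro_Ax} is decidable, since $\Lring\subseteq\Lval$. Combined with the archimedean check, this yields the corollary.

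The only delicate point is to verify that the translation $\varphi\mapsto\varphi^*$ is uniform in $p$: both $f$ and the degree bound on its factors depend only on $K$, not on $p$, and divisibility and irreducibility of polynomials of bounded degree are $\Lring$-definable uniformly across fields. Beyond this bookkeeping no essential obstacle is anticipated; in particular, the fact that finite extensions of $\mathbb{Q}_p$ are interpreted uniformly by interpreting their minimal polynomials (rather than choosing a generator per prime) is what makes the single sentence $\varphi^*$ do the required work.
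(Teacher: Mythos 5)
Your proof is correct and takes essentially the same route as the paper's sketch in Remark~\ref{rem:finite_ext}: one reduces to $\mathbb{Q}$ by interpreting the non-archimedean completions $K_\mathfrak{p}\cong\mathbb{Q}_p[X]/(g_i)$ in $\mathbb{Q}_p$ uniformly via the factorization $f=\prod_i g_i$ of the minimal polynomial of a primitive element, and handles the finitely many archimedean completions separately by Tarski. The paper's remark phrases this as a sentence $\varphi_f$ expressing that $\varphi$ holds in each field generated by a root of an irreducible factor $f_i$ of $f$, which is the same construction as your $\varphi^*$.
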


See also \cite{Kostas} for a recent negative result in this direction.

For completions of global function fields, there is at this point little hope to decide the common theory of all completions, 
since even the theory of a single completion $\mathbb{F}_q(\!(t)\!)$ seems out of reach,
in spite of progress such as \cite{Kuhlmann}.
For the existential theory (that is, essentially, deciding which polynomials have zeros), the following result of Anscombe and the second author 
{
(which can be obtained by combining Corollary 5.11(1c,d) and Proposition 5.9(1a) of \cite{AF23})
is related but does not consider the family of completions of one fixed global function field like $\mathbb{F}_p(t)$, but instead varies $p$:}

\begin{theorem}[\cite{AF23}]\label{thm:intro1}
The existential $\mathfrak{L}_{\rm val}$-theory of all 
{
$\mathbb{F}_p(\!(t)\!)$ and the existential $\mathfrak{L}_{\rm val}$-theory of almost all $\mathbb{F}_p(\!(t)\!)$,}
where $p$ runs over the prime numbers, are both decidable.    
\end{theorem}

This builds crucially on our understanding of the existential theory of each individual $\mathbb{F}_q(\!(t)\!)$, where we have two kinds of results:
By \cite{AF16}, building on Kuhlmann's theory of tame valued fields \cite{Kuhlmann_tame},  the existential theory of any $\mathbb{F}_q(\!(t)\!)$ in the language of valued fields $\mathfrak{L}_{\rm val}$ is decidable.
The existential theory of $\mathbb{F}_q(\!(t)\!)$ in the language $\mathfrak{L}_{\rm val}(t)$ of valued fields with a constant symbol for $t$ was shown to be decidable in \cite{DS} assuming resolution of singularities in positive characteristic,
and in \cite{ADF23} assuming a consequence of local uniformisation (which in turn follows from resolution of singularities) called (R4) (see Section \ref{sec:fullparam}),
see also \cite{Kartas} for related recent results. Building on 
and extending these results, we will show:

\begin{theorem}\label{thm:intro_noparam}
For a global function field $K$, the following theories are decidable:
\begin{enumerate}[$(a)$]
\item the existential $\mathfrak{L}_{\rm val}$-theory of {\em all} completions of $K$,
\item the existential $\mathfrak{L}_{\rm val}$-theory of {\em almost all} completions of $K$,
\item the existential $\mathfrak{L}_{\rm val}(K)$-theory of {\em almost all} completions of the perfect hull of $K$,
\item the existential $\mathfrak{L}_{\rm ring}(K)$-theory of {\em almost all} completions of $K$.
\end{enumerate}
Moreover, assuming \Rfour\ holds, also the following theories are decidable:
\begin{enumerate}[$(a)$]
\setcounter{enumi}{4}
\item the existential $\mathfrak{L}_{\rm val}(K)$-theory of {\em all} completions of $K$,
\item the existential $\mathfrak{L}_{\rm val}(K)$-theory of {\em almost all} completions of $K$.
\end{enumerate}
\end{theorem}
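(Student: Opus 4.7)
Let $p > 0$ be the characteristic of $K$ and let $\mathbb{F}_q$ be its exact field of constants. Every completion $K_v$ of $K$ is a complete discretely valued field of characteristic $p$ with finite residue field $\mathbb{F}_{q_v}$, so as a pure $\mathfrak{L}_{\rm val}$-structure it is isomorphic to $\mathbb{F}_{q_v}((t))$. A count of closed points of each degree on a smooth projective model of $K$---via the Riemann hypothesis for curves over finite fields---shows that the set of residue degrees $[\mathbb{F}_{q_v}:\mathbb{F}_q]$ is cofinite in $\mathbb{Z}_{>0}$. Consequently $(a)$ and $(b)$ reduce to deciding which existential $\mathfrak{L}_{\rm val}$-sentences hold in all, respectively almost all, $\mathbb{F}_{q^d}((t))$. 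The plan for this step is to combine the decidability of the existential $\mathfrak{L}_{\rm val}$-theory of each individual $\mathbb{F}_{q^d}((t))$, furnished by \cite{AF16}, with a uniform-in-$d$ argument modelled on---but distinct from---the uniform-in-$p$ argument of \cite{AF23}; concretely, I would try to run the same coarsening/ultraproduct-of-residue-fields strategy with $p$ fixed and $d \to \infty$.

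For $(d)$, I would feed parameters from $K$ into the previous argument as follows: any given finite tuple from $K$ consists of units in $\mathcal{O}_v$ for almost all $v$, so an existential $\mathfrak{L}_{\rm ring}(K)$-sentence is, at all but finitely many $v$, essentially an $\mathfrak{L}_{\rm ring}$-sentence over $\mathbb{F}_{q^d}((t))$ with extra parameters that are units with specified residues. Combined with the existential $\emptyset$-definability of the valuation ring of $\mathbb{F}_{q^d}((t))$ in $\mathfrak{L}_{\rm ring}$---a byproduct of the techniques of \cite{AF16}---this should reduce $(d)$ to $(b)$ enriched with residue-field parameters. For $(c)$, passage to the perfect hull $K^{\rm perf}$ makes each parameter in $K$ infinitely $p$-divisible in every completion, so an $\mathfrak{L}_{\rm val}(K)$-sentence can be translated into one whose parameter dependence is absorbed into fresh existential quantifiers for $p^n$-th roots; the hope is that this reduces $(c)$ to $(b)$ without invoking (R4), whereas in $K$ itself the lack of such $p$-power roots is what forces the appeal to (R4) in $(e)$ and $(f)$.

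Parts $(e)$ and $(f)$ proceed under (R4) by combining the decidability of the existential $\mathfrak{L}_{\rm val}(t)$-theory of each $\mathbb{F}_q((t))$ from \cite{ADF23} with the same uniform-in-$d$ reduction as in $(a)$, $(b)$, now with parameters from $K$ supplied alongside the uniformizer. The principal obstacle throughout, in my view, is uniformity across the residue extensions: whereas \cite{AF23} leverages decidability of the common theory of all finite fields (after Ax) to achieve uniformity in the prime $p$, the family $\{\mathbb{F}_{p^d}\}_{d \geq 1}$ for fixed $p$ is less uniformly accessible by the same model-theoretic machinery (the standard asymptotic invariants---residue characteristic, etc.---no longer separate members of the family), and securing a decision procedure uniform in $d$ is where I expect the genuinely new technical work to lie.
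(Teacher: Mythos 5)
Your high-level reduction of parts (a) and (b) to the question of which existential $\Lval$-sentences hold in all / almost all $\mathbb{F}_{q^d}(\!(t)\!)$, with $d$ ranging over an explicitly computable cofinite set, matches the paper. However, you have misdiagnosed where the difficulty lies, and the place you locate the ``genuinely new technical work'' is in fact the easiest step. You worry that the family $\{\mathbb{F}_{q^d}\}_{d\geq 1}$ for fixed $q$ is ``less uniformly accessible'' than the family $\{\mathbb{F}_p\}_p$. This is not so: Ax's results apply verbatim. The theory of all finite fields and of all pseudofinite fields is decidable [Ax, Thm.~13, 13'], and since a sentence $\varphi$ holds in almost all finite fields extending $\mathbb{F}_q$ iff it holds in all pseudofinite fields extending $\mathbb{F}_q$, the ``uniform-in-$d$'' step is immediate. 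What you are missing is the transfer principle that makes this usable at all: by the Ax--Kochen/Ershov-style result of \cite{AF16} (Prop.~\ref{lem:He} in the paper), the universal/existential $\Lval$-theory of an equicharacteristic henselian nontrivially valued field is completely determined by the universal/existential $\Lring$-theory of its residue field. This is what lets one pass from completions to residue fields in the first place, and it is organised in the paper by a general ``bridge'' lemma (Lemma~\ref{lem:main}) that handles the unions and intersections over $v\in\mathbb{P}_K$. Once you have that, $\bigcap_v\mathrm{Th}_{\forall/\exists}(\hat K_v,\hat v)$ collapses to the universal/existential theory of $T_1$ with residue field (pseudo)finite extending $\mathbb{F}_q$, plus an explicit finite correction $\eta_K$ in the ``all'' case, and decidability follows from Ax.

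For part (d) your sketch rests on a false premise: the valuation ring of $\mathbb{F}_{q^d}(\!(t)\!)$ is \emph{not} existentially $\emptyset$-definable in $\Lring$, nor even definable by a universal/existential $\Lring$-formula uniformly in almost all completions (this is exactly the content of Remark~\ref{rem:define_valuation}). So you cannot recover the valued structure from the ring structure by a single existential formula and then appeal to (b). The actual mechanism for (d) is different: one works directly in $\Lring(K)$, uses that pseudofinite residue fields are PAC, and applies an embedding lemma (from \cite{TwoExamples}) that compares existential $\Lring(K)$-theories of regular extensions of $K$ in terms of which geometrically integral smooth $K$-varieties acquire rational points; the henselian lift of residue points supplies these. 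For part (c), your idea of absorbing $p$-th roots into fresh existential quantifiers is heuristically in the right direction but is not what makes the argument go through; the paper instead uses a sharper transfer principle valid when the residue field is perfect (\cite[Cor.~5.7]{AF16}). For (e) and (f), beyond invoking \cite{ADF23} and \Rfour\ as you suggest, the genuinely new input is the axiom \URt\ controlling ramification over $k(t)$, which you do not anticipate; without it the transfer principle fails (Remark~\ref{rem:axiom-ur-needed}). In short: the reduction to residue fields and the appeal to \cite{AF16}, \cite{ADF23} are correct, the uniformity-in-$d$ is a non-issue by Ax, and the missing ingredients are the transfer/bridge lemma, the PAC embedding argument for (d), and the \URt\ axiom for (e), (f).
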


Here, $\Lval(K)$ and $\Lring(K)$ are the expansions of $\Lval$ and $\Lring$ by constant symbols for the elements of $K$.

\begin{corollary}
For a global function field $K$,
it is decidable whether a polynomial over $K$ has a zero in all respectively almost all completions of $K$,
where in the case of ``all completions''
we need to either restrict to polynomials over the prime field of $K$, or assume that \Rfour\ holds.
\end{corollary}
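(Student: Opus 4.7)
The plan is to encode each polynomial-zero question as an existential sentence and then invoke the appropriate part of Theorem~\ref{thm:intro_noparam}. Given $f\in K[X_1,\dots,X_n]$, saying that $f$ has a zero in a field $L\supseteq K$ is equivalent to the validity in $L$ of the existential $\Lring(K)$-sentence
$$\varphi_f \;:=\; \exists x_1\cdots\exists x_n\; f(x_1,\dots,x_n)=0.$$
Thus each of the three cases of the corollary reduces to asking whether $\varphi_f$ belongs to the existential theory of (almost) all completions of $K$ in a suitable language.

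For the "almost all completions" statement, I would appeal directly to part $(d)$ of Theorem~\ref{thm:intro_noparam}, which gives decidability of the existential $\Lring(K)$-theory of almost all completions of $K$; since $\varphi_f$ is an existential $\Lring(K)$-sentence, the algorithm from $(d)$ decides it. For the "all completions" case when $f$ has coefficients in the prime field $\mathbb{F}_p$ of $K$, every coefficient of $f$ is represented by an integer sum $1+\dots+1$ in characteristic $p$, so $\varphi_f$ is in fact a parameter-free existential $\Lring$-sentence and hence an existential $\Lval$-sentence; part $(a)$ then applies. Finally, for the unrestricted "all completions" case, part $(e)$ (which requires \Rfour) gives decidability of the existential $\Lval(K)$-theory of all completions of $K$, and $\varphi_f$, being an $\Lring(K)$-sentence, is a fortiori an $\Lval(K)$-sentence, so the same decision procedure handles it.

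There is no genuine obstacle beyond this translation: the substantive content is carried entirely by Theorem~\ref{thm:intro_noparam}, and the corollary is a routine specialization of it to the particular existential sentences $\varphi_f$ arising from polynomials.
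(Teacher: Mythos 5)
Your proposal is correct and matches the paper's approach: the corollary is stated as an immediate consequence of Theorem~\ref{thm:intro_noparam}, and your translation of the polynomial-zero question into the existential sentence $\varphi_f$ and the dispatch to parts (d), (a), and (e) respectively is exactly the intended specialization.
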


In fact, we will not only prove decidability but also axiomatise these theories,
where possible for general function fields of one variable.
Moreover we will prove the results not just for existential sentences
but for 
universal/existential sentences,
i.e.~finite boolean combinations of existential sentences
(so that in particular we can decide not only whether a given polynomial 
{
has a zero in all completions
and whether it has a zero in almost all completions},
but also whether 
{
it has a zero in at most finitely many completions,
and whether it has a zero in no completion.}

The six parts of Theorem \ref{thm:intro1} are proven in four sections
(Section \ref{sec:noparam}-\ref{sec:fullparam}),
where each section opens with a general result in the model theory of valued fields.
While in the first of these sections (in which parts (a) and (b) are proven)
this result is readily available in the literature,
they get more and more novel as the paper progresses.
In particular, in Section \ref{sec:fullparam} we offer some generalizations to results from \cite{ADF23}, by replacing a separability condition with a new and weaker axiom regarding ramification.
In Section \ref{sec:notation} we fix notation and summarise the common arguments used in these four settings.

\section{Notation and preliminaries}
\label{sec:notation}

\noindent
A {\em valued field} is a pair $(F,v)$ consisting of a field $F$ and a (Krull) valuation $v$ on $F$ (with value group written additively).
For basics on valued fields see \cite{EP}.
By a {\em function field} (of one variable) we mean a finitely generated field extension $K/k$ of transcendence degree $1$
such that $k$ is algebraically closed in $K$.
We denote by $\mathbb{P}_K$ the set of places of $K/k$,
which we view as (normalised) discrete valuations on $K$, trivial on $k$.
Recall that for each $v\in\mathbb{P}_K$,
the residue field $Kv$ is a finite extension of $k$. 
A {\em global function field} is a function field $K/k$ with $k$ finite.
For basics on function fields see \cite{Stichtenoth}.
A field is {\em pseudofinite} if it is an infinite model of the theory of finite fields.
A {\em $K$-variety} is a separated scheme of finite type over $K$.

We work in the language $\mathfrak{L}_{\rm ring}=\{+,\cdot,0,1\}$ of rings,
and the one-sorted language $\mathfrak{L}_{\rm val}=\mathfrak{L}_{\rm ring}\cup\{\mathcal{O}\}$
of valued fields. For a valuation $v$ on a field $F$ we denote by
$F$ the $\mathfrak{L}_{\rm ring}$-structure $F$,
and by $(F,v)$ the corresponding $\mathfrak{L}_{\rm val}$-structure, where we interpret $\mathcal{O}$
as the valuation ring $\mathcal{O}_v$ of $v$.
We will always fix a field $C$ and view extensions $F$ of $C$ as
structures in the language $\mathfrak{L}_{\rm ring}(C)$, which is $\mathfrak{L}_{\rm ring}$
expanded by constant symbols for the elements of $C$, similarly for $\mathfrak{L}_{\rm val}(C)$.
In particular, 
${\rm Th}(F)$ is then the $\mathfrak{L}_{\rm ring}(C)$-theory of the field $F$ 
and ${\rm Th}(F,v)$ denotes the $\mathfrak{L}_{\rm val}(C)$-theory of the valued field $(F,v)$.
We view the residue field $Fv$ as an $\mathfrak{L}_{\rm ring}(C)$-structure via the residue map, which for definiteness we define to be constant 0 outside the valuation ring of $v$.
If $v$ is trivial on $C$, then we view $Fv$ again as an extension of $C$.
When we speak of decidability we always
assume that $C$ is countable and comes with a fixed injection $C\rightarrow\mathbb{N}$ such that, with a standard Gödel coding, the atomic diagram of $C$ is computable.
Such an injection can always be found when $C$ is a global function field (cf.~\cite[\S19.2]{FJ}).

An {\em existential} respectively {\em universal} formula is a formula of the form $\exists x_1,\dots,x_n\psi$ 
respectively  $\forall x_1,\dots,x_n\psi$
with $\psi$ quantifier-free.
A {\em universal/existential} formula is a formula obtained from finitely many existential or universal sentences by conjunctions and disjunctions.
For a theory $T$ we write $T_{\forall/\exists}$ for the set of universal/existential consequences of $T$ (in the language of $T$),
and we call ${\rm Th}_{\forall/\exists}(M):={\rm Th}(M)_{\forall/\exists}$ the universal/existential theory of the structure $M$.

We fix a field $C$
and denote by $T_0$ the $\mathfrak{L}_{\rm val}(C)$-theory of valued fields extending the trivially valued field $C$.
For an $\mathfrak{L}_{\rm val}(C)$-theory $T\supseteq T_0$ and an $\mathfrak{L}_{\rm ring}(C)$-theory $R$ we denote by
$T(R)$ the theory of $(F,v)\models T$ with $Fv\models R$.
Here we view $C$ as embedded into $Fv$ via the residue map.
For example, in the following lemma,
$T({\rm Th}_{\forall/\exists}(Fv))_{\forall/\exists}$ refers to the set of universal/existential consequences of the common $\mathfrak{L}_{\rm val}(C)$-theory of those $(L,w)\models T$ whose residue field $Lw$ satisfies the same
universal/existential $\mathfrak{L}_{\rm ring}(C)$-sentences as $Fv$.

\begin{lemma}\label{lem:main}
Let $T\supseteq T_0$ be an $\mathfrak{L}_{\rm val}(C)$-theory.
Suppose that
\begin{equation}\label{eqn:star}
\tag{$\ast$} {\rm Th}_{\forall/\exists}(F,v)={\rm Th}_{\forall/\exists}(F',v') \mbox{ whenever }  (F,v),(F',v')\models T \mbox{ with } {\rm Th}_{\forall/\exists}(Fv)={\rm Th}_{\forall/\exists}(F'v').
\end{equation}
Then, with $R_0={\rm Th}(\{Fv:(F,v)\models T\})$, the following holds:
\begin{enumerate}[$(a)$]
\item ${\rm Th}_{\forall/\exists}(F,v)=T({\rm Th}_{\forall/\exists}(Fv))_{\forall/\exists}$ for every $(F,v)\models T$.
\item If
$T$ is computably enumerable and $R\supseteq R_0$ is a decidable $\mathfrak{L}_{\rm ring}(C)$-theory,
then $T(R)_{\forall/\exists}$ is decidable\footnote{Recall the assumptions we have made on $C$ above when we speak about decidability.}.
\item If $(R_i)_{i\in I}$ is a family of deductively closed
$\mathfrak{L}_{\rm ring}(C)$-theories containing $R_0$, then
$\bigcap_{i\in I}T(R_i)_{\forall/\exists}=
T(\bigcap_{i\in I}R_i)_{\forall/\exists}$.
\end{enumerate}
Moreover, if we weaken the conclusion of $(*)$
to ${\rm Th}_{\forall/\exists}(F)={\rm Th}_{\forall/\exists}(F')$, then (a), (b) and (c) hold if we interpret each ``$\forall/\exists$'' to refer only to universal/existential $\mathfrak{L}_{\rm ring}(C)$-sentences.
\end{lemma}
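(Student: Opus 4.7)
The plan is to establish (a) first, since it isolates the semantic core, and then derive (c) and (b) via a uniform compactness-based characterisation of when a $\forall/\exists$-sentence lies in $T(R')_{\forall/\exists}$ for $R'\supseteq R_0$. The final addendum is then handled by rerunning the three arguments with every ${\rm Th}_{\forall/\exists}$ interpreted as the $\Lring(C)$-$\forall/\exists$-theory of the underlying field and using the weaker form of $(*)$. For (a), the point is that universal/existential sentences are closed under negation up to logical equivalence, so whenever $F'v'\models {\rm Th}_{\forall/\exists}(Fv)$ one automatically has ${\rm Th}_{\forall/\exists}(F'v')={\rm Th}_{\forall/\exists}(Fv)$: a $\forall/\exists$-sentence $\theta$ true in $F'v'$ but false in $Fv$ would place $\neg\theta$ into ${\rm Th}_{\forall/\exists}(Fv)$ and hence into $F'v'$, a contradiction. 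Combining this equality with $(*)$ shows that every $(F',v')\models T({\rm Th}_{\forall/\exists}(Fv))$ has the same $\forall/\exists$-theory as $(F,v)$; since $(F,v)$ is itself such a model, intersecting yields (a).

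For (c) and (b), the workhorse is the following consequence of (a) via compactness: for any deductively closed $R'\supseteq R_0$, a $\forall/\exists$-sentence $\varphi$ fails to lie in $T(R')_{\forall/\exists}$ if and only if there exists a universal/existential $\Lring(C)$-sentence $\theta$ consistent with $R'$ such that $T(\{\theta\})\vdash\neg\varphi$. The forward direction starts from a refuting $(F,v)\models T(R')$, applies (a) to rewrite $\neg\varphi$ as a consequence of $T({\rm Th}_{\forall/\exists}(Fv))$, and compactifies (via closure of $\forall/\exists$ under conjunction) to a single $\theta\in{\rm Th}_{\forall/\exists}(Fv)$; consistency of $R'\cup\{\theta\}$ is witnessed by $Fv$ itself. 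The reverse direction uses the lifting step below to turn a model $K\models R'\cup\{\theta\}$ into some $(F,v)\models T$ with $Fv\equiv K$, yielding $(F,v)\models T(R')\cup T(\{\theta\})$ and hence $(F,v)\not\models\varphi$. Now (c) follows contrapositively: a witness $\theta$ for $\varphi\notin T(\bigcap_i R_i)_{\forall/\exists}$ cannot be inconsistent with every $R_i$, since by deductive closure that would place $\neg\theta$ in each $R_i$ and hence in $\bigcap_i R_i$, contradicting consistency of $\bigcap_i R_i\cup\{\theta\}$; thus $\theta$ is consistent with some $R_{i_0}$, giving $\varphi\notin T(R_{i_0})_{\forall/\exists}$. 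For (b), $T(R)$ is c.e.\ since $T$ is c.e.\ and $R$ is decidable, so $T(R)_{\forall/\exists}$ is c.e.; the same characterisation makes its complement c.e., because ``$R\cup\{\theta\}$ consistent'' reduces via deductive closure of $R$ to the decidable condition ``$\neg\theta\notin R$'', while ``$T(\{\theta\})\vdash\neg\varphi$'' is c.e.

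The main obstacle is the \emph{lifting step} just invoked: given $K\models R_0$, produce $(F,v)\models T$ with $Fv\equiv K$. This rests on the standard fact that the class $\{Fv:(F,v)\models T\}$ is closed under ultraproducts (since $T$ is first-order and residue fields commute with ultraproducts of valued fields), so the models of its theory $R_0$ are precisely those elementarily equivalent to a member of the class; concretely, one applies compactness to the $\Lval(C)$-theory $T\cup\{\tilde\psi:\psi\in {\rm Th}(K)\}$, whose finite subtheories are satisfiable because their residue-field content lies in ${\rm Th}(K)\supseteq R_0$. The addendum is proven by rerunning the three arguments with each ${\rm Th}_{\forall/\exists}$ read as the $\Lring(C)$-$\forall/\exists$-theory of the underlying field; the weaker form of $(*)$ delivers precisely the equality needed in the analogue of (a), and because the sentences $\theta$ appearing in the characterisation used for (b) and (c) are already $\Lring(C)$-sentences, the compactness, deducibility and enumeration steps carry through unchanged.
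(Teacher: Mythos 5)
Your proof is correct, and it realises the ``can be seen directly'' option that the paper alludes to but does not carry out: the paper instead deduces the lemma from the abstract ``bridge/arch'' machinery of Corollary~2.23 of \cite{AF23}, verifying that the theories $T$ and $R_0$, the residue map $\sigma$, and the interpretation $\iota$ fit that framework. Your route makes explicit the key semantic characterisation --- that $\varphi$ fails in $T(R')_{\forall/\exists}$ iff some $\forall/\exists$-sentence $\theta$ consistent with $R'$ has $T(\{\theta\})\vdash\neg\varphi$ --- together with the lifting step from $K\models R_0$ to $(F,v)\models T$ with $Fv\equiv K$ (the argument that $\{Fv:(F,v)\models T\}$ is closed under ultraproducts, or equivalently the compactness argument, is the right one). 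Your derivation of (a) from closure of $\forall/\exists$-sentences under negation and $(*)$, of (c) by contraposition via deductive closure of the $R_i$, and of (b) by showing both $T(R)_{\forall/\exists}$ and its complement among $\forall/\exists$-sentences are c.e., are all sound. Your direct proof has the advantage of being self-contained; the paper's citation-based proof is shorter for a reader already acquainted with \cite{AF23}, and emphasises that the lemma is an instance of a reusable pattern. One terminological remark: in (b) you invoke ``deductive closure of $R$'' to reduce consistency of $R\cup\{\theta\}$ to $\neg\theta\notin R$, but the statement only calls $R$ a decidable theory; the correct justification is that for a decidable theory $R$ the relation $R\vdash\neg\theta$ is decidable (which is what ``decidable theory'' means), so the step goes through without assuming $R$ itself is deductively closed.
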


\begin{proof}
This can be seen directly, or deduced from
Corollary 2.23 of \cite{AF23}:
The universal/existential sentences of
$\mathfrak{L}_1=\mathfrak{L}_{\rm ring}(C)$ and
$\mathfrak{L}_2=\mathfrak{L}_{\rm val}(C)$
form {\em fragments} in the terminology of that paper,
and together with the theories $R_0$ respectively $T$ and the map $\sigma\colon(F,v)\mapsto Fv$, they form a {\em bridge} $B$.
With $\hat{B}$ the same bridge but now {\em all}
$\mathfrak{L}_{\rm ring}(C)$- respectively
$\mathfrak{L}_{\rm val}(C)$-sentences,
and $\iota$ the map on sentences obtained from interpreting the residue field in the valued field, we obtain an {\em arch} $A=(B,\hat{B},\iota)$.
Assumption 
(i) of that corollary is satisfied
by our hypotheses on $C$ and $T$,
assumption (ii) is satisfied by the definition of $R_0$,
and assumption (iii) is satisfied by {\rm(\ref{eqn:star})}.
Part (a) of our lemma follows from (III) of the corollary,
part (b) follows from (II),
and part (c) follows from (I) together with
\cite[Lemma 2.24]{ADF23}.
The ``moreover'' part follows just the same, after changing $\mathfrak{L}_2$
to
$\mathfrak{L}_{\rm ring}(C)$.
\end{proof}

For the convenience of the reader, we now list the theories $T\supseteq T_0$ which we will discuss in this work:
\begin{enumerate}
    \item[$T_1$] is the $\mathfrak{L}_{\rm val}$-theory of equicharacteristic henselian nontrivially valued fields of fixed characteristic $p\geq 0$, see Section \ref{sec:noparam}.
    \item[$T_2$] is the $\mathfrak{L}_{\rm val}(C)$-theory of henselian nontrivially valued fields $(F,v)$ extending the trivially valued field $C$ with $Fv$ perfect and $Fv/C$ separable, see Section \ref{sec:4}.
    \item[$T_3$] is the $\mathfrak{L}_{\rm val}(C)$-theory
of henselian nontrivially valued fields $(F,v)$ extending the trivially valued field $C$ such that $F/C$ is separable and every geometrically integral smooth $C$-variety has $Fv$-rational points, see Section \ref{sec:5}.
\item[$T_4$] is the $\mathfrak{L}_{\rm val}(C)$-theory
of henselian nontrivially valued fields $(F,v)$
extending the trivially valued field $C$
satisfying an additional axiom \URt, to be defined in Section \ref{sec:fullparam}.
\end{enumerate}

\section{Existential $\mathfrak{L}_{\rm val}$-theory without parameters}
\label{sec:noparam}

\noindent
We start by studying the universal/existential $\mathfrak{L}_{\rm val}$-theory of all and almost all completions of a function field without any parameters.
Here our background theory is the $\mathfrak{L}_{\rm val}$-theory $T_1$ of equicharacteristic henselian nontrivially valued fields of fixed characteristic $p\geq 0$, which for technical reasons we view as an $\mathfrak{L}_{\rm val}(C)$-theory for $C=\mathbb{F}_p$ respectively $C=\mathbb{Q}$,
{and the main result we will use about this theory is the following:

\begin{theorem}[{\cite[Thm.~1.1]{AF16}}]\label{thm:AF16_1.1}
For every field $k$, the theory
$T=T_1({\rm Th}_{\forall/\exists}(k))$ is $\exists$-complete,
i.e.~for every existential $\mathfrak{L}_{\rm val}$-sentence $\varphi$ we have either $T\models\varphi$
or $T\models\neg\varphi$.
\end{theorem}
}

\begin{proposition}\label{lem:He}
If $(F,v)$ and $(F',v')$ are equicharacteristic henselian nontrivially valued fields
whose residue fields $Fv$ and $F'v'$ have the same universal/existential $\mathfrak{L}_{\rm ring}$-theory, then
$(F,v)$ and $(F',v')$ have the same 
universal/existential $\mathfrak{L}_{\rm val}$-theory.
In particular, {\rm(\ref{eqn:star})} holds for $T=T_1$ with $C$ a prime field.
\end{proposition}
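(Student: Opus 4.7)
The plan is to reduce the proposition to a classical Ax--Kochen--Ershov style embedding principle for equicharacteristic henselian valued fields.

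First I would observe that equality of universal/existential theories is equivalent to equality of existential theories: every universal/existential sentence is a Boolean combination of existential sentences, and the negation of an existential sentence is universal. So it suffices to show $\mathrm{Th}_\exists(F,v)=\mathrm{Th}_\exists(F',v')$. By symmetry and the standard criterion that $\mathrm{Th}_\exists(M)\subseteq\mathrm{Th}_\exists(N)$ iff $M$ embeds into some elementary extension of $N$, the task reduces to the following: given an $\Lring$-embedding of $Fv$ into some $\Lring$-elementary extension of $F'v'$, produce an $\Lval$-embedding of $(F,v)$ into some $\Lval$-elementary extension of $(F',v')$.

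To do this I would pass to a sufficiently (say $|F|^+$-) saturated elementary extension $(\tilde F,\tilde v)\succeq(F',v')$. Its residue field $\tilde F\tilde v$ is correspondingly saturated and, by the hypothesis on residue fields, receives $Fv$ as an $\Lring$-subfield. Its value group $\tilde v(\tilde F)$ is saturated and nontrivial, and by saturation the ordered abelian group $v(F)$ embeds into $\tilde v(\tilde F)$. One then lifts these two embeddings to an $\Lval$-embedding $(F,v)\hookrightarrow(\tilde F,\tilde v)$ by invoking the equicharacteristic Ax--Kochen--Ershov embedding principle.

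The main obstacle sits in the final lifting step in equicharacteristic $p$, where inseparability typically complicates AKE-style arguments. For the embedding version (as opposed to transfer of a complete theory), however, a direct induction along a transcendence basis goes through: algebraic steps are handled by Hensel's lemma together with the algebraic closure of the residue-field embedding inside the saturated $\tilde F\tilde v$, while transcendental steps use the embedding of the value group together with the nontriviality of $\tilde v$. As the opening of this section indicates, this existential/embedding form of the principle is already available in the literature on henselian valued fields, so the proof should essentially amount to an appeal to a known result.
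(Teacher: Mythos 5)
Your reduction of universal/existential to existential, and your plan to invoke a known existential Ax--Kochen--Ershov principle for equicharacteristic henselian valued fields, match the paper exactly: the paper's proof is a one-line citation of \cite[Thm.~1.1]{AF16}, which states precisely that the existential $\Lval$-theory of an equicharacteristic henselian nontrivially valued field is determined by the existential $\Lring$-theory of its residue field. So the top-level strategy and the appeal to the literature are right.

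However, the inner sketch of how that principle would be proved does not go through as stated, and the gap sits exactly where you flag the ``main obstacle.'' In residue characteristic $p$ there is no embedding lemma of the form you describe: a transfinite induction along a transcendence basis, with ``algebraic steps handled by Hensel's lemma,'' breaks down on immediate algebraic steps (in particular purely inseparable and defect extensions), which Hensel's lemma does not reach and which the relative algebraic closure of the residue field does not control. This is precisely what makes the full AKE transfer fail in equicharacteristic $p$, and circumventing it at the existential level is the actual content of \cite{AF16} (which works instead by comparing $(F,v)$ with $Fv(\!(t)\!)$ and arguing on finitely generated valued subfields, not by a global residue-field/value-group lifting). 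The subsidiary step ``by saturation the ordered abelian group $v(F)$ embeds into $\tilde v(\tilde F)$'' is also more delicate than stated: while any ordered abelian group does embed into a sufficiently saturated nontrivial one, the embedding must be chosen with foresight about divisibility (a naive choice such as the inclusion $\mathbb{Z}\hookrightarrow\mathbb{Z}\preceq\tilde v(\tilde F)$ may be unextendable to the value group of $F$). Since you ultimately fall back on citing the literature, your conclusion is sound, but the sketch is not a self-contained substitute for that citation.
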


\begin{proof}
By {Theorem \ref{thm:AF16_1.1}}, %\cite[Thm.~1.1]{AF16},
$(F,v)$ and $(F',v')$ have the same existential $\mathfrak{L}_{\rm val}$-theory,
and therefore also the same universal/existential $\mathfrak{L}_{\rm val}$-theory.
\end{proof}

If $K/k$ is a function field and $v\in\mathbb{P}_K$, we denote by
$(\hat{K}_v,\hat{v})$ the completion of $(K,v)$,
which is an equicharacteristic henselian nontrivially valued field
with residue field $\hat{K}_v\hat{v}=Kv$.

\begin{proposition}\label{prop:Eax}
Let $K/k$ be a function field of one variable.
The universal/existential $\mathfrak{L}_{\rm val}$-theory of all respectively almost all completions $\hat{K}_v$, $v\in\mathbb{P}_K$,
is the universal/existential theory of equicharacteristic henselian nontrivially valued fields with residue field a
model of the theory of all respectively almost all $Kv$.
\end{proposition}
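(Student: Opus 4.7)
The strategy is to apply Lemma \ref{lem:main} with $T=T_1$: Proposition \ref{lem:He} verifies hypothesis $(\ast)$, making parts (a) and (c) of Lemma \ref{lem:main} available. Since $(\hat{K}_v,\hat{v})\models T_1$ has residue field $Kv$, part (a) directly gives
\[
{\rm Th}_{\forall/\exists}(\hat{K}_v,\hat{v}) = T_1({\rm Th}(Kv))_{\forall/\exists} \qquad \text{for every } v \in \mathbb{P}_K.
\]
Write $R_v := {\rm Th}(Kv)$, a deductively closed $\mathfrak{L}_{\rm ring}$-theory that contains $R_0 = {\rm Th}(\{Fv:(F,v)\models T_1\})$ because $Kv$ is itself a residue field of a model of $T_1$.

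For the "all completions" version, I intersect the displayed identity over $v\in\mathbb{P}_K$ and apply Lemma \ref{lem:main}(c) to the family $(R_v)_{v\in\mathbb{P}_K}$:
\[
\bigcap_{v\in\mathbb{P}_K}{\rm Th}_{\forall/\exists}(\hat{K}_v,\hat{v}) = \bigcap_{v} T_1(R_v)_{\forall/\exists} = T_1\Bigl(\bigcap_v R_v\Bigr)_{\forall/\exists},
\]
and $\bigcap_v R_v$ is precisely the theory of all $Kv$, which is the right-hand side of the proposition in this case.

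For the "almost all" version I note that the $\forall/\exists$-theory of almost all $\hat{K}_v$ equals $\bigcup_S \bigcap_{v\notin S} {\rm Th}_{\forall/\exists}(\hat{K}_v,\hat{v})$ over finite $S\subset\mathbb{P}_K$, and similarly the theory of almost all $Kv$ is $R^* := \bigcup_S R_S$ where $R_S := \bigcap_{v\notin S} R_v$. Applying the "all" argument to each cofinite subfamily yields $\bigcap_{v\notin S} {\rm Th}_{\forall/\exists}(\hat{K}_v,\hat{v}) = T_1(R_S)_{\forall/\exists}$, so the task reduces to verifying $\bigcup_S T_1(R_S)_{\forall/\exists} = T_1(R^*)_{\forall/\exists}$. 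The inclusion $\subseteq$ is immediate from $R_S \subseteq R^*$. For $\supseteq$, if $\phi \in T_1(R^*)_{\forall/\exists}$, compactness produces finitely many sentences $\psi_1,\dots,\psi_n \in R^*$ such that $\phi$ already follows from $T_1$ together with the statements that the residue field satisfies the $\psi_i$; each $\psi_i$ lies in some $R_{S_i}$, and taking the finite union $S := S_1 \cup \cdots \cup S_n$ we obtain $\psi_i \in R_S$ for all $i$ (using $R_{S'} \subseteq R_S$ when $S' \subseteq S$), so $\phi \in T_1(R_S)_{\forall/\exists}$. I do not expect any serious obstacle: the substantive content is already packaged in Proposition \ref{lem:He} (via \cite{AF16}) and Lemma \ref{lem:main}, and the only additional ingredient is the short compactness argument above translating "intersection over cofinite families" into "theory of almost all".
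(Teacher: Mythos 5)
Your proof is correct and takes essentially the same approach as the paper: apply Lemma \ref{lem:main}(a) to each completion, then Lemma \ref{lem:main}(c) to intersect, and finally pass from cofinite intersections to the directed union. The only difference is cosmetic — the paper labels the last step ``trivial'' while you spell out the compactness-plus-directedness argument that justifies it.
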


\begin{proof}
As {\rm(\ref{eqn:star})}  holds for $T=T_1$ by Proposition \ref{lem:He},
and $(\hat{K}_v,\hat{v})$ is a model of $T_1$,
Lemma \ref{lem:main}(a) gives that
${\rm Th}_{\forall/\exists}(\hat{K}_v,\hat{v})=T_1({\rm Th}(Kv))_{\forall/\exists}$ for every $v\in\mathbb{P}_K$.
Lemma \ref{lem:main}(c) shows that
for each $P\subseteq\mathbb{P}_K$,
\begin{equation}\label{eqn:intersection}
 \bigcap_{v\in P}T_1({\rm Th}(Kv))_{\forall/\exists}\;=\;T_1(\bigcap_{v\in P}{\rm Th}(Kv))_{\forall/\exists}.
\end{equation}
This immediately implies the claim for {\em all} completions by taking $P=\mathbb{P}_K$ in (\ref{eqn:intersection}), and for {\em almost all} completions one gets that
$$
 \bigcup_{P_0\subseteq\mathbb{P}_K\atop\mbox{\tiny finite}}\bigcap_{v\in\mathbb{P}_K\setminus P_0}T_1({\rm Th}(Kv))_{\forall/\exists} 
 \;=\; \bigcup_{P_0\subseteq\mathbb{P}_K\atop\mbox{\tiny finite}}T_1\big(\bigcap_{v\in\mathbb{P}_K\setminus P_0}{\rm Th}(Kv)\big)_{\forall/\exists}
 \;=\; T_1\big(\bigcup_{P_0\subseteq\mathbb{P}_K\atop\mbox{\tiny finite}}\bigcap_{v\in\mathbb{P}_K\setminus P_0}{\rm Th}(Kv)\big)_{\forall/\exists},
$$
where the first equality holds by (\ref{eqn:intersection})
and the second equality holds trivially.
\end{proof}

We now turn to the case where $k=\mathbb{F}_q$ is finite.
Then $Kv$ is a finite field
for each $v\in\mathbb{P}_K$,
and the completion $(\hat{K}_v,\hat{v})$ of $(K,v)$ is isomorphic
to the Laurent series field $(Kv(\!(t)\!),v_t)$ with the $t$-adic valuation $v_t$ \cite[Thm.~II.2]{Serre}.
For $f\in k[t,x]$ irreducible we denote
by $K_f$ the global function field ${\rm Quot}(k[t,x]/(f))$.

\begin{lemma}\label{lem:resfields}
If $k=\mathbb{F}_q$, 
the map $v\mapsto|Kv|$ has finite fibres and
$$
 R(K):=\{|Kv|:v\in\mathbb{P}_K\}
$$ 
is a cofinite subset of
$\{q^n:n\in\mathbb{N}\}$.
More precisely, if $K=K_f$ with $f\in\mathbb{F}_q[t,x]$ absolutely irreducible of total degree $\deg(f)$, then
$$
E(K):=\{q^n: n\in\mathbb{N},q^n\notin R(K)\}\subseteq\{q,q^2,\dots,q^{2{\rm deg}(f)^2}\}.
$$
\end{lemma}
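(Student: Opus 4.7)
The plan is to translate the statements into counting closed points on the smooth projective model of $K/\mathbb{F}_q$, so that the Hasse--Weil bound governs everything. Let $X$ be the unique smooth projective curve over $k=\mathbb{F}_q$ with function field $K$; since $\mathbb{F}_q$ is algebraically closed in $K$, the curve $X$ is geometrically irreducible. Places $v\in\mathbb{P}_K$ correspond bijectively to closed points $P$ of $X$, with residue field $Kv\cong\mathbb{F}_{q^{\deg P}}$. Writing $B_n$ for the number of closed points of $X$ of degree exactly $n$, one has $R(K)=\{q^n:B_n\geq 1\}$ and $\#\{v\in\mathbb{P}_K:|Kv|=q^n\}=B_n$. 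The finiteness of fibres is then immediate from $B_n\leq|X(\mathbb{F}_{q^n})|<\infty$, and cofiniteness of $R(K)$ in $\{q^n:n\in\mathbb{N}\}$ reduces to showing $B_n\geq 1$ for all sufficiently large $n$.

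For the latter, I would invoke the Hasse--Weil bound $|N_n-(q^n+1)|\leq 2g\,q^{n/2}$, where $g$ is the genus of $X$ and $N_n=|X(\mathbb{F}_{q^n})|=\sum_{d\mid n}d\,B_d$. Combining this with the crude estimate $\sum_{d\mid n,\,d<n}d\,B_d\leq\sum_{d=1}^{\lfloor n/2\rfloor}N_d$ and applying Hasse--Weil to each $N_d$ yields an inequality of the shape
\[
 n\,B_n \;\geq\; q^n - (2g+2)\,q^{n/2} - 8g\,q^{n/4} - n,
\]
whose right-hand side is positive whenever $n$ exceeds an explicit linear function of $\log_q(g+1)$.

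For the explicit bound in the second statement, I use that if $K=K_f$ with $f\in\mathbb{F}_q[t,x]$ absolutely irreducible of degree $d=\deg(f)$, then $X$ is the normalisation of the projective closure of $V(f)\subset\mathbb{A}^2_{\mathbb{F}_q}$, which is a plane curve of degree $d$. Its geometric genus is bounded above by the arithmetic genus of such a plane curve, $g\leq(d-1)(d-2)/2<d^2/2$. Substituting into the preceding inequality shows $B_n\geq 1$ for every $n>2d^2$ (with plenty of room to spare), whence $E(K)\subseteq\{q,q^2,\ldots,q^{2\deg(f)^2}\}$.

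The main obstacle is the arithmetic book-keeping in the Hasse--Weil step: one must track the contributions of lower-degree subfield points to bound $\sum_{d<n}d\,B_d$ without losing the leading term $q^n$. Since the target bound $2d^2$ is vastly more generous than the true asymptotic growth $O(\log d)$ of the smallest $n$ with $B_n\geq 1$, the inequalities involved are very slack, and essentially any reasonable estimation suffices.
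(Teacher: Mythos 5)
Your proof is correct and follows essentially the same route as the paper: translate $\mathbb{P}_K$ into closed points of the smooth projective model, bound the genus by the degree of the plane model, and invoke the Hasse--Weil bound to produce a closed point of every sufficiently large degree. The only real difference is one of granularity: the paper cites the packaged form of this consequence from Stichtenoth (\emph{there exists a place of degree $n$ whenever $n\geq 4g+3$}, Cor.~5.2.10(c)) and combines it with $g\leq(d-1)^2/2$, whereas you re-derive such a bound from scratch via $N_n=\sum_{d\mid n}d\,B_d$ and the estimate on lower-degree contributions; this buys nothing beyond self-containment, at the cost of the arithmetic book-keeping you flag. One tiny point worth noting: your displayed inequality does not quite go through for $d=1$ (e.g.\ $q=2$, $n=3$ gives a negative right-hand side), but that case is vacuous anyway since $K_f$ is then a rational function field with places of every degree --- the paper sidesteps this by assuming $d>1$ without loss of generality, and you should do the same.
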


\begin{proof}
Every $Kv$ is a finite extension of the field of constants $k$, and so $|Kv|=q^{[Kv:k]}$.
Without loss of generality, $d:={\rm deg}(f)>1$.
The genus $g$ of $K$ is at most $(d-1)^2/2$ \cite[Cor.~3.11.4]{Stichtenoth},
and the Hasse--Weil bound 
implies that 
$K$ has a place of degree $n$
whenever $n\geq 4g+3$
\cite[Cor.~5.2.10(c)]{Stichtenoth}.
This holds in case $n>2d^2$.
\end{proof}

\begin{theorem}\label{thm:Eax}
Let $K/k$ be a global function field.
\begin{enumerate}[(a)]
\item 
The universal/existential theory of {\em almost all} completions $\hat{K}_v$
is the universal/existential theory of
equicharacteristic henselian nontrivially valued fields with residue field a pseudofinite field extending~$k$.
\item 
The universal/existential theory of {\em all} completions $\hat{K}_v$
is the universal/existential theory of
equicharacteristic henselian nontrivially valued fields whose residue fields
are finite or pseudofinite, extend $k$,
and satisfy the additional sentence 
$\eta_K:=\bigwedge_{q'\in E(K)}\neg\chi_{q'}$,
where $\chi_{q'}$ is such that $M\models\chi_{q'}\Leftrightarrow M\cong\mathbb{F}_{q'}$.
\end{enumerate}
\end{theorem}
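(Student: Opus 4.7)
The plan is to apply Proposition \ref{prop:Eax} to reduce both parts to identifying the common $\mathfrak{L}_{\rm ring}(k)$-theory of (almost) all residue fields $Kv$. Each $Kv$ is a finite extension of $k = \mathbb{F}_q$, hence isomorphic to $\mathbb{F}_{q^{[Kv:k]}}$, and by Lemma \ref{lem:resfields} the map $v \mapsto [Kv:k]$ has finite fibres with image $N(K) := \{n \in \mathbb{N} : q^n \in R(K)\}$ cofinite in $\mathbb{N}$, its complement corresponding to $E(K)$.

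For part (a), the finite-fibre and cofinite-image properties together imply that an $\mathfrak{L}_{\rm ring}(k)$-sentence holds in almost all $Kv$ if and only if it holds in $\mathbb{F}_{q^n}$ for almost all $n \geq 1$. By Ax's theorem on pseudofinite fields, applied in the language $\mathfrak{L}_{\rm ring}(k)$ with constants from $k$, this common theory coincides with the common theory of all pseudofinite fields extending $k$. Feeding this back into Proposition \ref{prop:Eax} then yields (a).

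For part (b), my approach is to show that the models of ${\rm Th}(\{Kv : v \in \mathbb{P}_K\})$ are precisely the fields $F$ that are finite or pseudofinite, extend $k$, and satisfy $\eta_K$; Proposition \ref{prop:Eax} then gives (b). One inclusion is immediate: each $Kv$ extends $k$ and satisfies $\eta_K$ (since $|Kv| \in R(K)$ forces $|Kv| \notin E(K)$), and the first-order theory of finite fields is contained in the common theory, so every model is finite or pseudofinite, extends $k$, and satisfies $\eta_K$. For the reverse direction I would split into cases: if $F$ is finite, then $\eta_K$ forces $|F| \notin E(K)$, hence $F \cong \mathbb{F}_{q^n}$ with $n \in N(K)$, so $F \cong Kv$ for some $v$ and $F$ models the common theory; if $F$ is pseudofinite extending $k$, then for any sentence $\varphi$ in the common theory, $\varphi$ holds in $\mathbb{F}_{q^n}$ for every $n \in N(K)$, hence (by cofiniteness of $N(K)$) in all but finitely many $\mathbb{F}_{q^n}$, so by Ax's theorem $\varphi$ holds in every pseudofinite field extending $k$, in particular in $F$.

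I expect the main subtlety to be the use of Ax's theorem on pseudofinite fields with constants from the possibly non-prime field $k$, which amounts to the observation that any pseudofinite field extending $k$ is elementarily equivalent in $\mathfrak{L}_{\rm ring}(k)$ to an ultraproduct of finite fields extending $k$. The bound in Lemma \ref{lem:resfields} ensures $E(K)$ is finite, so that $\eta_K$ is a finite conjunction and hence a genuine first-order sentence.
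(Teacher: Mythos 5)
Your proof is essentially correct and follows the same route as the paper: reduce via Proposition~\ref{prop:Eax} to identifying the residue-field theory, use Lemma~\ref{lem:resfields} to describe the residue fields, and invoke Ax's theorem on (pseudo)finite fields. Two cosmetic differences are worth noting. First, you work in $\mathfrak{L}_{\rm ring}(k)$ and flag Ax's theorem with parameters from $k$ as the main subtlety; this is an unnecessary detour, since the theorem and Proposition~\ref{prop:Eax} are stated without $k$-parameters (the ambient theory $T_1$ has $C$ a prime field), and the paper encodes the condition ``extends $k=\mathbb{F}_q$'' by the single $\mathfrak{L}_{\rm ring}$-sentence $\psi_q$ rather than by constants — avoiding the parametrised version of Ax altogether. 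Second, for part (b) you argue by directly showing that the model classes coincide, whereas the paper argues on the level of consequences ($\varphi$ holds in all $Kv$ iff $(\psi_q \wedge \eta_K)\to\varphi$ holds in all finite, equivalently all finite or pseudofinite, fields); since both $\bigcap_v {\rm Th}(Kv)$ and the theory of your target class are deductively closed and the target class is elementary, the two formulations are interchangeable. Your Case 2 in the reverse inclusion for (b), where you pass from ``$\varphi$ holds in almost all $\mathbb{F}_{q^n}$'' to ``$\varphi$ holds in every pseudofinite field extending $k$'', does require the $\psi_q$-relativised form of Ax (or the ultraproduct argument you sketch), and is correct.
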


\begin{proof}
(a) follows immediately from Proposition \ref{prop:Eax}, since the theory of almost all $Kv$ is by Lemma \ref{lem:resfields} the theory of almost all finite extensions of $k\cong\mathbb{F}_q$.
So if $\psi_q$ is a sentence expressing that the field contains a subfield isomorphic to $\mathbb{F}_q$, then a sentence $\varphi$ holds in almost all $Kv$ if and only if $\psi_q\rightarrow\varphi$ holds in almost all finite fields.
By \cite[Thm.~9]{Ax} this is the case if and only if it holds in all pseudofinite fields, i.e.~$\varphi$ holds in all pseudofinite fields extending $\mathbb{F}_q$.

For (b), similarly a sentence $\varphi$ holds in all $Kv$ if and only if
$(\psi_q\wedge\eta_K)\rightarrow\varphi$ holds in all finite fields.
By \cite[Thm.~9]{Ax} this is the case if and only if it holds in all finite or pseudofinite fields, i.e.~$\varphi$ holds in all finite or pseudofinite fields extending $\mathbb{F}_q$ that in addition satisfy $\eta_K$.
\end{proof}

\begin{corollary}\label{cor:Edec}
There is an algorithm which, given
a prime power $q$,
an absolutely irreducible polynomial $f\in\mathbb{F}_q[t,x]$
and a universal/existential $\mathfrak{L}_{\rm val}$-sentence $\varphi$
decides whether $\varphi$
holds in {\em all} completions of $K_f$,
and whether $\varphi$
holds in {\em almost all} completions of $K_f$.
\end{corollary}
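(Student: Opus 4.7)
The plan is to apply Theorem~\ref{thm:Eax} to reduce each decidability question to one about a theory of the form $T_1(R)_{\forall/\exists}$, and then to invoke Lemma~\ref{lem:main}(b), which turns decidability of $R$ (together with recursive enumerability of $T_1$) into decidability of $T_1(R)_{\forall/\exists}$. Throughout, $T_1$ denotes the computably axiomatisable $\mathfrak{L}_{\rm val}$-theory of equicharacteristic henselian nontrivially valued fields of characteristic $p$, where $q$ is a power of $p$.

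For part~(a), Theorem~\ref{thm:Eax}(a) identifies the $\forall/\exists$-theory of almost all completions of $K_f$ with $T_1(R_a)_{\forall/\exists}$, where $R_a$ is the $\mathfrak{L}_{\rm ring}$-theory of pseudofinite fields satisfying the single sentence $\psi_q$. This theory is decidable by Ax's result on the theory of finite fields (\cite[Thm.~9]{Ax}), so Lemma~\ref{lem:main}(b) yields the algorithm directly.

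For part~(b), Theorem~\ref{thm:Eax}(b) identifies the theory with $T_1(R_b)_{\forall/\exists}$, where $R_b$ axiomatises finite or pseudofinite fields satisfying $\psi_q\wedge\eta_K$. Since every pseudofinite field is elementarily equivalent to an ultraproduct of finite fields, the sentences true in all finite or pseudofinite fields coincide with those true in all finite fields; by \cite[Thm.~9]{Ax} this theory is decidable, and adjoining the single sentence $\psi_q\wedge\eta_K$ preserves decidability --- \emph{provided} $\eta_K$ can be produced effectively from the input $(q,f)$. Once this is done, Lemma~\ref{lem:main}(b) again concludes.

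The one substantive step is therefore the computation of $E(K)$ from $(q,f)$. By Lemma~\ref{lem:resfields} only the degrees $n$ with $1\leq n\leq 2\deg(f)^{2}$ need to be examined, and for each such $n$ one must decide whether $K_f$ admits a place of degree exactly $n$. This is a standard effective task in computational algebra: the finitely many places of $K_f$ of degree at most $2\deg(f)^{2}$ can be enumerated directly from $f$ (for instance, by computing the integral closures of $\mathbb{F}_q[t]$ and $\mathbb{F}_q[t^{-1}]$ in $K_f$ and factoring $f$ modulo irreducible polynomials of suitably bounded degree). Carrying this out yields $E(K)$ and hence $\eta_K$ explicitly; this is essentially the only non-model-theoretic ingredient of the argument, and the main (though mild) obstacle to be overcome.
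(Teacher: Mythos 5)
Your proof is correct and follows essentially the same route as the paper: reduce via Theorem~\ref{thm:Eax} to decidability of the two residue-field theories, invoke Ax for the theory of (pseudo)finite fields, and apply Lemma~\ref{lem:main}(b) (with Proposition~\ref{lem:He} providing hypothesis~(\ref{eqn:star})). The one place where you diverge from the paper is the computation of $E(K_f)$: the paper enumerates varieties and rational maps to exhibit a smooth projective model of the curve $f=0$ and then counts $\mathbb{F}_{q^n}$-points for $n\leq 2\deg(f)^2$, whereas you propose computing integral closures of $\mathbb{F}_q[t]$ and $\mathbb{F}_q[t^{-1}]$ and factoring to enumerate places of bounded degree directly --- both are standard effective procedures and accomplish the same thing. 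One small citation slip: \cite[Thm.~9]{Ax} is the transfer principle between finite and pseudofinite fields; the decidability of the theory of (pseudo)finite fields that you need is \cite[Thm.~13, 13']{Ax}.
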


\begin{proof}
By Proposition \ref{lem:He} and Lemma \ref{lem:main}(b), 
$T_1(R)_{\forall/\exists}$ is decidable
for any decidable $\mathfrak{L}_{\rm ring}$-theory $R$.
So by Theorem \ref{thm:Eax} it suffices to show that the theory of pseudofinite fields extending $\mathbb{F}_q$,
and the theory of fields that are finite or pseudofinite extending $\mathbb{F}_q$ and satisfying $\eta_K$ are decidable.
They indeed are,
since the theory of finite fields and the theory of pseudofinite fields are decidable by Ax \cite[Thm.~13, 13']{Ax},
the map $q\mapsto\psi_q$ 
(defined as in the proof of Theorem \ref{thm:Eax}) is obviously computable,
and also the map $(q,f)\mapsto \eta_{K_f}$ is computable:
By enumerating varieties over $\mathbb{F}_q$ and rational maps we can effectively find a smooth projective curve $X$ over $\mathbb{F}_q$ birationally equivalent to the affine curve $f=0$,
and then $q^n\in E(K)$ if and only if
every $\mathbb{F}_{q^n}$-rational point of $X$
is $\mathbb{F}_{q^m}$-rational for some $m|n$, $m\neq n$,
which we can check for the finitely many
$n\leq2{\rm deg}(f)^2$.
\end{proof}

Theorem \ref{thm:intro_noparam}(a) and (b) are an immediate consequence of Corollary~\ref{cor:Edec} in which we fix $q$ and $f$ with $K\cong K_f$.

\begin{remark}
The algorithm described in Corollary \ref{cor:Edec} can be extended to output,
whenever $\varphi$ holds in almost all completions of $K=K_f$, 
a list of precisely those finitely many $v\in\mathbb{P}_K$ such that $(\hat{K}_v,\hat{v})\not\models\varphi$.
Since making this precise would first require a choice of presentation of the places of $K$, we omit the details.
\end{remark}

\begin{remark}
If (like in Theorem \ref{thm:intro_noparam}) one is interested only in the {\em existential} theory
(rather than the universal/existential theory),
the description in Theorem~\ref{thm:Eax}(b) and its proof can be simplified further:
For example, if $K=\mathbb{F}_q(t)$, or more generally if $K$ has a place of degree one, then the existential theory of all $Kv$
is just the existential theory of $\mathbb{F}_q$.
In this case, the existential theory of all $\hat{K}_v$ is
the existential theory of $\mathbb{F}_q(\!(t)\!)$.
Similarly, the {\em universal} theory of all (or equivalently almost all) $Kv$ equals the universal theory of the algebraic closure $\overline{\mathbb{F}}_q$,
and one can show that the universal theory of all (or almost all) $\hat{K}_v$
is the universal theory of $\overline{\mathbb{F}}_q(\!(t)\!)$.
\end{remark}

\begin{remark}\label{rem:define_valuation}
One could deduce 
Theorem \ref{thm:intro_noparam}(a,b)
from the (a priori weaker) corresponding statement for $\mathfrak{L}_{\rm ring}$
instead of $\mathfrak{L}_{\rm val}$
if there were an existential $\mathfrak{L}_{\rm ring}$-formula $\varphi(x)$ and a universal $\mathfrak{L}_{\rm ring}$-formula $\psi(x)$
(all formulas here without parameters)
such that both $\varphi$ and $\psi$ would define the valuation ring in all (respectively almost all) completions of $K$.
However, 
since by Lemma \ref{lem:resfields} the smallest elementary class $\mathcal{F}\supseteq\{Kv:v\in\mathbb{P}_K\}$ contains pseudofinite fields extending $\overline{\mathbb{F}}_q$,
\cite[Thm.~5.1, Prop.~6.5(a), 6.20(a)]{AF17} (with $C=\mathbb{Z}$, $\mathcal{K}=\{(\hat{K}_v,\hat{v}):v\in\mathbb{P}_K\})$
proves that neither $\varphi$ nor $\psi$ can exist.
We point out that although
it is known that there exist
$\exists\forall$-formulas 
and $\forall\exists$-formulas
that 
define the valuation ring in all completions of $K$
(see \cite[Thm.~1.4, 1.6]{FP},
and \cite{FehmJahnke} for more on definable valuations),
no universal/existential
formula $\eta$
defines the valuation ring in almost all completions of $K$:
Fix $v_0\in\mathbb{P}_K$
such that $q^{d}\in R(K)$ for every $q^d\geq|Kv_0|$
and such that $\eta$ defines the valuation ring in $\hat{K}_v$
for every $v$ with $|Kv|\geq |Kv_0|$.
Then $\eta$ defines the valuation ring of the unique extension of $v_0$ in every finite extension of
$\hat{K}_{v_0}$.
Since both $\eta$ and $\neg\eta$ are 
logically equivalent to $\forall\exists$-formulas,
and therefore go up chains \cite[Exercise 3.1.8]{ChangKeisler}, %\cite[Lemma 3.1.6]{TZ},
this implies that
also in the algebraic closure of $\hat{K}_{v_0}$,
$\eta$ defines the valuation ring of the unique extension of $v_0$,
contradicting quantifier elimination for algebraically closed fields.
\end{remark}

\section{Existential $\mathfrak{L}_{\rm val}$-theory with constant parameters, and the perfect hull}
\label{sec:4}

\noindent
Next we consider two cases where adding constants from a subfield $C$ of $K$ does not do much harm:
This essentially happens when the fields involved are perfect.
Our background theory now is
the $\mathfrak{L}_{\rm val}(C)$-theory $T_2\supseteq T_1$ of henselian nontrivially valued fields $(F,v)$ extending the trivially valued field $C$
with $Fv$ perfect and $Fv/C$ separable (which implies $C$ perfect).
{
We will use the following result about this theory:

\begin{theorem}[{\cite[Corollary 5.7]{AF16}}]\label{thm:AF16_5.7}
For every separable extension $k/C$ with $k$ perfect, the theory $T=T_2({\rm Th}(k))$ is $\exists$-$C$-complete,
i.e.~for any existential $\mathfrak{L}_{\rm val}(C)$-sentence $\varphi$
we have either $T\models\varphi$ or $T\models\neg\varphi$.
\end{theorem}
}

\begin{proposition}\label{lem:TC}
Let $C$ be perfect.
If $(F,v)$ and $(F',v')$
are henselian nontrivially valued fields extending the trivially valued field $C$
whose residue fields $Fv$ and $F'v'$ are perfect
and have the same universal/existential
$\mathfrak{L}_{\rm ring}(C)$-theory,
then
$(F,v)$ and $(F',v')$
have the same universal/existential $\mathfrak{L}_{\rm val}(C)$-theory,
i.e.~{\rm(\ref{eqn:star})} holds for $T=T_2$.
\end{proposition}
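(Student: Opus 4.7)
The approach parallels the proof of Proposition \ref{lem:He}, which was an immediate consequence of \cite[Thm.~1.1]{AF16}; here we need a parametric strengthening that accommodates constants from the common perfect subfield $C$. The key coherence observation to keep in mind throughout is that since $v$ is trivial on $C$, the residue map restricts to an embedding $C\hookrightarrow Fv$ (and likewise $C\hookrightarrow F'v'$), so the same constants of $\mathfrak{L}_{\rm val}(C)$ and $\mathfrak{L}_{\rm ring}(C)$ can be interpreted consistently in each valued field and in its residue field.

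The argument then proceeds by an embedding/back-and-forth strategy. Passing to sufficiently saturated elementary extensions $(F^{*},v^{*})\succeq(F,v)$ and $(F'^{*},v'^{*})\succeq(F',v')$, the assumption that $Fv$ and $F'v'$ share a universal/existential $\mathfrak{L}_{\rm ring}(C)$-theory yields, by a standard co-embeddability argument, $\mathfrak{L}_{\rm ring}(C)$-embeddings between the residue fields (and their saturated elementary extensions) in both directions. The core technical step is to lift any such $\mathfrak{L}_{\rm ring}(C)$-embedding of residue fields to an $\mathfrak{L}_{\rm val}(C)$-embedding of the corresponding henselian valued fields over $C$, using henselianity, equal characteristic, and the perfectness/separability hypotheses. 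This is essentially a parametric version of \cite[Thm.~1.1]{AF16}.

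The main obstacle is precisely this lifting step over $C$. In the parameter-free setting one may choose a coefficient-field-style lift of the residue field into the valued field freely; with parameters, one must ensure that the lift is compatible with the fixed embedding $C\subseteq F$ already given. Here the hypotheses that $C$ and $Fv$ are perfect and that $Fv/C$ is separable are precisely what is needed: separability of $Fv/C$ allows one to transfer separable algebraic pieces between residue field and valued field via the canonical Hensel lift compatibly with the inclusion $C\subseteq F$, while perfectness removes the purely inseparable pathologies that would otherwise obstruct the embedding. Once this parametric lifting is established, the universal/existential transfer asserted by (\ref{eqn:star}) for $T_2$ follows by the same back-and-forth reasoning as in the parameter-free case.
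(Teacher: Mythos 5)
Your proposal correctly identifies the spirit of what is needed—a parametric version of the Anscombe--Fehm result from \cite{AF16}—but it leaves the heart of the argument as an unproved claim, and the route it sketches is also harder than the one the paper actually takes.

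The missing ingredient is \cite[Corollary~5.7]{AF16}, which already provides the parametric result you describe as needing to be established: for every $(L,w)\models T_2$, one has ${\rm Th}_{\forall/\exists}(L,w)=T_2({\rm Th}(Lw))_{\forall/\exists}$. In particular, the universal/existential $\Lval(C)$-theory of $(F,v)$ \emph{equals} that of the Laurent series field $(Fv(\!(t)\!),v_t)$, and similarly for $(F',v')$. This turns the problem into one of comparing two Laurent series fields. After passing to a saturated elementary extension of $(F',v')$, the inclusion ${\rm Th}_\exists(Fv)\subseteq{\rm Th}_\exists(F'v')$ yields a $C$-embedding $Fv\hookrightarrow F'v'$ by a standard saturation argument (\cite[Lemma~5.2.1]{ChangKeisler}); this embedding extends \emph{trivially} to an $\Lval(C)$-embedding $(Fv(\!(t)\!),v_t)\hookrightarrow(F'v'(\!(t)\!),v_t)$ by applying $(\!(t)\!)$ functorially. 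The chain ${\rm Th}_\exists(F,v)={\rm Th}_\exists(Fv(\!(t)\!),v_t)\subseteq{\rm Th}_\exists(F'v'(\!(t)\!),v_t)={\rm Th}_\exists(F',v')$ then gives one inclusion, and symmetry the other.

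Your ``core technical step''---lifting a residue field embedding directly to an $\Lval(C)$-embedding of the original henselian valued fields $(F,v)\hookrightarrow(F'^*,v'^*)$---is precisely the nontrivial content of the proposition, and the proposal does not supply it; saying that separability and perfectness ``allow one to transfer separable algebraic pieces via the canonical Hensel lift'' and ``remove the purely inseparable pathologies'' is a heuristic, not an argument. Indeed, one does not in general lift residue embeddings to embeddings of arbitrary equicharacteristic henselian valued fields: the value groups and the particular coefficient field structure intervene. The paper sidesteps all of this by first replacing each valued field with the associated Laurent series field (via \cite[Cor.~5.7]{AF16}), for which the lifting of embeddings is immediate. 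You should either cite \cite[Cor.~5.7]{AF16} and follow the Laurent-series reduction, or genuinely prove the lifting lemma you are relying on.
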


\begin{proof} 
Theorem \ref{thm:AF16_5.7} implies that
${\rm Th}_{\forall/\exists}(L,w)=T_2({\rm Th}(Lw))_{\forall/\exists}$ for every $(L,w)\models T_2$.
We assume ${\rm Th}_{\forall/\exists}(Fv)={\rm Th}_{\forall/\exists}(F'v')$
and have to show that ${\rm Th}_{\forall/\exists}(F,v)={\rm Th}_{\forall/\exists}(F',v')$,
and so we can replace $(F',v')$ by an elementary extension to
assume without loss of generality that $F'v'$ is $|Fv|^+$-saturated.
As in particular ${\rm Th}_\exists(Fv)\subseteq{\rm Th}_\exists(F'v')$
we obtain an embedding $Fv\rightarrow F'v'$ over $C$ \cite[Lemma 5.2.1]{ChangKeisler},
which extends to an embedding $(Fv(\!(t)\!),v_t)\rightarrow(F'v'(\!(t)\!),v_t)$,
proving 
$$
 {\rm Th}_\exists(F,v)=
{\rm Th}_\exists(Fv(\!(t)\!),v_t)\subseteq
{\rm Th}_\exists(F'v'(\!(t)\!),v_t)=
{\rm Th}_\exists(F',v'),
$$
where the two equalities follow from the result mentioned above, as $(Fv(\!(t)\!),v_t)$ and
$(F'v'(\!(t)\!),v_t)$ are models of $T_2$.
The symmetric argument shows that
${\rm Th}_\exists(F',v')\subseteq{\rm Th}_\exists(F,v)$,
hence 
${\rm Th}_{\forall/\exists}(F,v)={\rm Th}_{\forall/\exists}(F',v')$.
\end{proof}

\begin{proposition}
Let $K/k$ be a function field of one variable with $k$ perfect.
The universal/existential $\mathfrak{L}_{\rm val}(k)$-theory of {\em all} respectively {\em almost all} completions $\hat{K}_v$, $v\in\mathbb{P}_K$,
is the universal/existential $\mathfrak{L}_{\rm val}(k)$-theory of henselian nontrivially valued fields 
extending the trivially valued field $k$
with residue field a model of the $\mathfrak{L}_{\rm ring}(k)$-theory of all respectively almost all $Kv$.
\end{proposition}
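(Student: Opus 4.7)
My plan is to mirror the proof of Proposition \ref{prop:Eax} essentially verbatim, replacing $T_1$ with $T_2$ and Proposition \ref{lem:He} with Proposition \ref{lem:TC}. The point is that the additional hypothesis that $k$ is perfect is exactly what is needed to place each completion $(\hat{K}_v,\hat{v})$ inside the background theory $T_2$, so that Proposition \ref{lem:TC} becomes applicable.

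First I would verify that $(\hat{K}_v,\hat{v}) \models T_2$ for every $v\in\mathbb{P}_K$: the completion is henselian and nontrivially valued; the valuation $\hat{v}$ is trivial on $k$ because $v$ is a place of $K/k$; the residue field $\hat{K}_v\hat{v}$ equals $Kv$, a finite extension of $k$; and since $k$ is perfect, $Kv$ is perfect and $Kv/k$ is (even finite) separable. So all axioms of $T_2$ over $C=k$ are satisfied.

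Second, Proposition \ref{lem:TC} tells me that condition (\ref{eqn:star}) holds for $T=T_2$ over $C=k$. Thus Lemma \ref{lem:main}(a) yields
\begin{equation*}
 {\rm Th}_{\forall/\exists}(\hat{K}_v,\hat{v}) \;=\; T_2({\rm Th}(Kv))_{\forall/\exists}
\end{equation*}
for every $v\in\mathbb{P}_K$. Since each $Kv$ is itself the residue field of a $T_2$-model (namely $(\hat{K}_v,\hat{v})$), we have ${\rm Th}(Kv)\supseteq R_0$ for each $v$, so Lemma \ref{lem:main}(c) applies and gives, for any $P\subseteq\mathbb{P}_K$,
\begin{equation*}
 \bigcap_{v\in P} T_2({\rm Th}(Kv))_{\forall/\exists} \;=\; T_2\Big(\bigcap_{v\in P}{\rm Th}(Kv)\Big)_{\forall/\exists}.
\end{equation*}

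Finally, the \emph{all completions} case is immediate by taking $P=\mathbb{P}_K$, and the \emph{almost all} case follows by the very same cofinite-union manipulation (unions over finite exceptional sets $P_0$, commuting with $T_2(\cdot)_{\forall/\exists}$ via the identity above) that appears in the proof of Proposition \ref{prop:Eax}. I do not anticipate a genuine obstacle: all the non-trivial model-theoretic content has already been compressed into Proposition \ref{lem:TC} (via \cite[Cor.~5.7]{AF16}), and the rest is a direct application of the bridge/arch formalism packaged in Lemma \ref{lem:main}.
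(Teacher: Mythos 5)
Your proposal matches the paper's own proof essentially verbatim: the paper likewise observes that $k$ perfect implies each $Kv$ is perfect, so $(\hat{K}_v,\hat{v})\models T_2$, and then states that the argument proceeds exactly as for Proposition \ref{prop:Eax} with $T_1$ replaced by $T_2$ and Proposition \ref{lem:He} by Proposition \ref{lem:TC}. Your additional explicit check that ${\rm Th}(Kv)\supseteq R_0$ for Lemma \ref{lem:main}(c) is correct and harmless.
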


\begin{proof}
Let $C=k$ and note that $k$ perfect implies that every $Kv$ is perfect, so $(\hat{K}_v,\hat{v})\models T_2$ for every $v\in\mathbb{P}_K$.
Therefore, this works completely analogous to the proof of Proposition \ref{prop:Eax},
replacing $T_1$ by $T_2$ and Proposition \ref{lem:He} by Proposition \ref{lem:TC}.
\end{proof}

For a field $K$ we denote by $K^{\rm perf}$ its maximal purely inseparable extension within $\overline{K}$.
For a valuation $v$ on a field $F$ we denote the unique extension of $v$ to $F^{\rm perf}$ again by $v$.

\begin{proposition}\label{prop:perf}
Let $K/k$ be a function field of one variable.
The following theories coincide:
\begin{enumerate}[(1)]
\item  The universal/existential $\mathfrak{L}_{\rm val}(K)$-theory of {\em almost all} $(\hat{K}_v)^{\rm perf}$, $v\in\mathbb{P}_K$.
\item The universal/existential $\mathfrak{L}_{\rm val}(K)$-theory of {\em almost all} $\hat{L}_v$, $v\in\mathbb{P}_L$, where $L=K^{\rm perf}$.
\item The universal/existential $\mathfrak{L}_{\rm val}(K)$-theory
of henselian nontrivially valued fields extending
the trivially valued field $K^{\rm perf}$
with residue field 
a model of the $\mathfrak{L}_{\rm ring}(K)$-theory
of almost all $(Kv)^{\rm perf}$.\footnote{Recall that we view $Kv$ as an $\mathfrak{L}_{\rm ring}(C)$-structure via the residue map $C\hookrightarrow K\rightarrow K_v$, where here $C=K$.}
\end{enumerate}
%pseudofinite.
\end{proposition}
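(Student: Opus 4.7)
The plan is to mirror the proof of Proposition~\ref{prop:Eax}, applying Lemma~\ref{lem:main}(a),(c) to a background theory $T$ for which $(\ast)$ holds by Proposition~\ref{lem:TC}. I first collect basic facts: since $L/K$ is purely inseparable, restriction yields a bijection $\mathbb{P}_L\to\mathbb{P}_K$, and for $v\in\mathbb{P}_K$ the unique extension to $L$ has residue field $(Kv)^{\rm perf}$ and value group the $p$-divisible hull $\mathbb{Z}[1/p]$. The completion $\hat{L}_v$ is itself perfect, because in characteristic $p$ one has $v((x-y)^{1/p})=v(x-y)/p$, so $p$-th roots preserve Cauchy sequences. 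The perfect hull $(\hat{K}_v)^{\rm perf}$ is henselian as a purely inseparable algebraic extension of the complete $\hat{K}_v$, embeds densely into $\hat{L}_v$ over $K$, and $\hat{L}_v$ is its completion; in particular both have residue field $(Kv)^{\rm perf}$ and value group $\mathbb{Z}[1/p]$.

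For $(1)\Leftrightarrow(2)$ I would apply Proposition~\ref{lem:TC} at each $v$ with $C=k^{\rm perf}$: both $(\hat{K}_v)^{\rm perf}$ and $\hat{L}_v$ model $T_2$ with this $C$ (each is henselian nontrivially valued, extends the trivially valued $k^{\rm perf}$, and has the same perfect residue field $(Kv)^{\rm perf}$ separable over $k^{\rm perf}$), yielding equal universal/existential $\mathfrak{L}_{\rm val}(k^{\rm perf})$-theories. To upgrade to $\mathfrak{L}_{\rm val}(K)$, note that the residue-field embedding used in the proof of Proposition~\ref{lem:TC} can be chosen to be the identity on $(Kv)^{\rm perf}$, which in particular fixes the residue image of $K$ (the same in both fields, namely the composition $K\to Kv\hookrightarrow(Kv)^{\rm perf}$); the resulting embedding of the Laurent-series model $((Kv)^{\rm perf}(\!(t)\!),v_t)$ into itself is then over $K$ via Serre's identification $\hat{K}_v\cong Kv(\!(t)\!)$, giving equal universal/existential $\mathfrak{L}_{\rm val}(K)$-theories at each $v$. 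Intersecting over cofinite subsets of $\mathbb{P}_K$ yields (1)$=$(2).

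For $(2)\Leftrightarrow(3)$ I would apply Lemma~\ref{lem:main}(a),(c) to $T=T_2$ with $C=K^{\rm perf}$, for which $(\ast)$ holds by Proposition~\ref{lem:TC}. Models of $T_2$ over the trivially valued $K^{\rm perf}$ are exactly those in (3), so the output is the universal/existential $\mathfrak{L}_{\rm val}(K^{\rm perf})$-theory of (3), and this restricts to the desired universal/existential $\mathfrak{L}_{\rm val}(K)$-theory since the models in question are perfect. The main obstacle is that each individual $\hat{L}_v$ fails to model $T_2$ with $C=K^{\rm perf}$, because the valuation is nontrivial on $K^{\rm perf}$, so Lemma~\ref{lem:main}(a) cannot be applied at each $v$ separately. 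This is bridged by the observation that in the almost-all-theory each fixed $a\in(K^{\rm perf})^{\ast}$ is forced to satisfy $w(a)=0$ (since $v(a)\neq 0$ for only finitely many $v$); hence the almost-all-theory of the $\hat{L}_v$ coincides with the universal/existential theory of any non-principal ultraproduct of them, which \emph{is} a model of $T_2$ over the trivially valued $K^{\rm perf}$ and whose residue field is a model of the theory of almost all $(Kv)^{\rm perf}$, completing the identification with (3).
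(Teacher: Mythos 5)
Your decomposition into $(1)\Leftrightarrow(2)$ and $(2)\Leftrightarrow(3)$ departs from the paper, which handles all three clauses at once by passing to nonprincipal ultraproducts over $\mathbb{P}_K$ and noting that for both families the ultraproduct satisfies $T_2$ with $C=K^{\rm perf}$ (since the valuation becomes trivial on $K^{\rm perf}$) and has residue field $\prod_{v}(Kv)^{\rm perf}/\mathcal{U}$; the three theories then fall out of the chain
${\rm Th}_{\forall/\exists}(K_\mathcal{U},v_\mathcal{U})=T_2({\rm Th}(K_\mathcal{U}v_\mathcal{U}))_{\forall/\exists}=T_2({\rm Th}(L_\mathcal{U}w_\mathcal{U}))_{\forall/\exists}={\rm Th}_{\forall/\exists}(L_\mathcal{U},w_\mathcal{U})$
together with Lemma~\ref{lem:main}(c). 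Your argument for $(2)\Leftrightarrow(3)$ is essentially this ultraproduct argument, apart from the imprecise phrase that the almost-all theory ``coincides with the universal/existential theory of \emph{any} non-principal ultraproduct'' --- it is the \emph{intersection} over all such ultraproducts, which is what Lemma~\ref{lem:main}(c) handles; the intent is clearly right, so this is only a wording slip.

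The real gap is in your $(1)\Leftrightarrow(2)$ step. You attempt a \emph{pointwise} comparison: for each $v$, show $(\hat K_v)^{\rm perf}$ and $\hat L_v$ have the same universal/existential $\Lval(K)$-theory, by first applying Proposition~\ref{lem:TC} with $C=k^{\rm perf}$ and then ``upgrading'' to $K$-parameters. But Proposition~\ref{lem:TC} (and the underlying \cite[Cor.~5.7]{AF16}) controls only $\Lval(C)$-sentences for a base field $C$ on which the valuation is \emph{trivial}, and at each individual $v$ the valuation is very much nontrivial on $K$: elements of $K$ have nonzero value, so they are not visible in the residue field and cannot be ``fixed'' by choosing the residue embedding to be the identity. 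Appealing to Serre's non-canonical isomorphism $\hat K_v\cong Kv(\!(t)\!)$ does not repair this, because that isomorphism does not respect the $K$-structure; nothing in your sketch produces an embedding of $\hat L_v$ into an elementary extension of $(\hat K_v)^{\rm perf}$ \emph{over $K$}, which is what the nontrivial inclusion ${\rm Th}_\exists^{\Lval(K)}(\hat L_v)\subseteq{\rm Th}_\exists^{\Lval(K)}((\hat K_v)^{\rm perf})$ would require. (The converse inclusion is of course free from the $K$-embedding $(\hat K_v)^{\rm perf}\hookrightarrow \hat L_v$.) This is exactly the obstruction the paper avoids by working only at the level of ultraproducts, where $K^{\rm perf}$ lands in the value-zero part and Proposition~\ref{lem:TC} applies with $C=K^{\rm perf}$ directly. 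If you want a pointwise version of $(1)\Leftrightarrow(2)$ you would need an independent argument that $(\hat K_v)^{\rm perf}$ is existentially closed, as a valued field with $K$-parameters, in its completion $\hat L_v$; this is plausible but is not something Proposition~\ref{lem:TC} gives you, and it is not needed for the proposition.
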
  

\begin{proof}
Let $C=K^{\rm perf}$.
By Łoś's theorem \cite[Thm.~4.1.9]{ChangKeisler},
the $\mathfrak{L}_{\rm val}(C)$-theory of almost all $(\hat{K}_v)^{\rm perf}$
is the $\mathfrak{L}_{\rm val}(C)$-theory of the ultraproducts
$(K_\mathcal{U},v_\mathcal{U})=\prod_{v\in\mathbb{P}_K}((\hat{K}_v)^{\rm perf},\hat{v})/\mathcal{U}$,
where $\mathcal{U}$ runs over the nonprincipal ultrafilters on $\mathbb{P}_K$.
As every $((\hat{K}_v)^{\rm perf},\hat{v})$ is a perfect henselian nontrivially valued field,
so is $(K_\mathcal{U},v_\mathcal{U})$.
In particular, $K_\mathcal{U}v_\mathcal{U}$ is perfect.
Since every $c\in C^\times$ has nonzero value for at most finitely many $v\in\mathbb{P}_K$, $v_\mathcal{U}$ is trivial on $C$.
%Since $L_\mathcal{U}$ is perfect, so is $L_\mathcal{U}w_\mathcal{U}$.
Thus $(K_\mathcal{U},v_\mathcal{U})\models T_2$.
Similarly,
the 
$\mathfrak{L}_{\rm val}(C)$-theory of almost all $\hat{L}_v$
is the $\mathfrak{L}_{\rm val}(C)$-theory of the ultraproducts
$(L_\mathcal{U},w_\mathcal{U})=\prod_{v\in\mathbb{P}_K}(\hat{L}_v,\hat{v})/\mathcal{U}$,
and also $(L_\mathcal{U},w_\mathcal{U})\models T_2$.
Moreover,
as $(\hat{K}_v)^{\rm perf}\hat{v}=(Kv)^{\rm perf}=Lv$,
we have that
$K_\mathcal{U}v_\mathcal{U}=\prod_{v\in\mathbb{P}_K}(Kv)^{\rm perf}/\mathcal{U}
=L_\mathcal{U}w_\mathcal{U}$.
%and $L_\mathcal{U}w_\mathcal{U}/Cw_\mathcal{U}$ is separable.
Proposition \ref{lem:TC}
and Lemma \ref{lem:main}(a,c)
therefore give that
%$$
% {\rm Th}_{\forall/\exists}(L_\mathcal{U},w_\mathcal{U})=T_2({\rm Th}_{\forall/\exists}(L_\mathcal{U}w_\mathcal{U};C))_{\forall/\exists},
%$$ 
%and so (see above)
$$
  {\rm Th}_{\forall/\exists}(K_\mathcal{U},v_\mathcal{U})
  =
   T_2({\rm Th}(K_\mathcal{U}v_\mathcal{U}))_{\forall/\exists}
  =
  T_2({\rm Th}(L_\mathcal{U}w_\mathcal{U}))_{\forall/\exists}
  =
  {\rm Th}_{\forall/\exists}(L_\mathcal{U},w_\mathcal{U})
$$
and
$$
  \bigcap_{\mathcal{U}}{\rm Th}_{\forall/\exists}(K_\mathcal{U},v_\mathcal{U}) = 
 \bigcap_{\mathcal{U}} T_2({\rm Th}(K_\mathcal{U}v_\mathcal{U}))_{\forall/\exists} = 
 T_2(\bigcap_{\mathcal{U}}{\rm Th}(K_\mathcal{U}v_\mathcal{U}))_{\forall/\exists}.
$$
As
$\bigcap_{\mathcal{U}}{\rm Th}(K_\mathcal{U}v_\mathcal{U})=\bigcap_{\mathcal{U}}{\rm Th}(\prod_{v\in\mathbb{P}_K} (Kv)^{\rm perf}/\mathcal{U})$
is the $\mathfrak{L}_{\rm ring}(C)$-theory of almost all $(Kv)^{\rm perf}$,
restricting the languages from $\mathfrak{L}_{\rm val}(C)$ and $\mathfrak{L}_{\rm ring}(C)$ to $\mathfrak{L}_{\rm val}(K)$ and
$\mathfrak{L}_{\rm ring}(K)$
gives the claim.
\end{proof}

\begin{lemma}\label{lem:psf}
For a global function field $K$,
the $\mathfrak{L}_{\rm ring}(K)$-theory of almost all $Kv$, $v\in\mathbb{P}_K$,
is the $\mathfrak{L}_{\rm ring}(K)$-theory of pseudofinite fields extending $K$,
and it is decidable.
\end{lemma}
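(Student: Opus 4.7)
The plan is to prove both assertions by reducing them to Ax's analysis of pseudofinite fields, supplemented by the Chebotarev density theorem for global function fields. Write $T_{\rm res}$ for the $\mathfrak{L}_{\rm ring}(K)$-theory of almost all $Kv$ and $T_{\rm ps}$ for the $\mathfrak{L}_{\rm ring}(K)$-theory of pseudofinite fields extending $K$. For $T_{\rm ps} \subseteq T_{\rm res}$, I would argue via ultraproducts: for each non-principal ultrafilter $\mathcal{U}$ on $\mathbb{P}_K$, the ultraproduct $F_{\mathcal{U}} := \prod_{v \in \mathbb{P}_K} Kv/\mathcal{U}$ is pseudofinite by Łoś's theorem applied to Ax's axiomatisation of the theory of all finite fields, and the diagonal residue map yields a field embedding $K \hookrightarrow F_{\mathcal{U}}$, well-defined and injective because $\{v \in \mathbb{P}_K : v(c) = 0\}$ is cofinite for every $c \in K^\times$. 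By Łoś's theorem again, $T_{\rm res}$ equals $\bigcap_\mathcal{U} \mathrm{Th}(F_{\mathcal{U}})$ as $\mathcal{U}$ ranges over non-principal ultrafilters, so any sentence in $T_{\rm ps}$ lies in each $\mathrm{Th}(F_{\mathcal{U}})$ and hence in $T_{\rm res}$.

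For the harder inclusion $T_{\rm res} \subseteq T_{\rm ps}$, I would show that every pseudofinite $F \supseteq K$ is $\mathfrak{L}_{\rm ring}(K)$-elementarily equivalent to some $F_{\mathcal{U}}$. The key input is that the complete $\mathfrak{L}_{\rm ring}(K)$-theory of a pseudofinite $F \supseteq K$ is determined by the isomorphism type over $K$ of $F \cap \widetilde{K}$; this should be a routine adaptation of Ax's characterisation of elementary equivalence of pseudofinite fields by their absolute numbers, using the PAC property together with the fact that the absolute Galois group is $\hat{\mathbb{Z}}$. Given $F$, the Chebotarev density theorem for global function fields then provides, for each finite Galois extension $L/K$ inside $F \cap \widetilde{K}$ and each Frobenius conjugacy class, infinitely many places $v \in \mathbb{P}_K$ whose residue data matches the required class; a standard compactness/ultrafilter construction then produces a non-principal $\mathcal{U}$ with $F \equiv_{\mathfrak{L}_{\rm ring}(K)} F_{\mathcal{U}}$.

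For decidability, once $T_{\rm res} = T_{\rm ps}$ is established, it suffices to decide $T_{\rm ps}$. I would invoke Ax's decidability of the $\mathfrak{L}_{\rm ring}$-theory of pseudofinite fields, combined with an effective Chebotarev theorem for function fields: given a sentence involving parameters from $K$, one effectively computes a finite Galois extension of $K$ capturing the relevant algebraic data, and then checks the resulting finitary condition using the presentation of $K$. The main obstacle is the $T_{\rm res} \subseteq T_{\rm ps}$ direction, specifically pinning down precisely which Galois-theoretic invariants over $K$ determine the $\mathfrak{L}_{\rm ring}(K)$-theory of a pseudofinite $F \supseteq K$ and verifying that every such invariant is realised by Frobenius data coming from the residue fields $Kv$.
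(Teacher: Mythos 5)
Your proposal takes a genuinely different route from the paper's. The paper proves the lemma in a few lines by citing machinery from Fried--Jarden's \emph{Field Arithmetic} directly: Theorem~20.9.4 (the theory of almost all $Kv$ coincides with the theory of sentences holding with Dirichlet density~1, and is decidable), Theorem~20.9.3 (this equals the theory of the fixed fields of Haar-almost-all $\sigma\in\mathrm{Aut}(\overline{K}/K)$), Theorem~20.5.4 with $e=1$ (this is the theory of perfect PAC extensions of $K$ with absolute Galois group $\hat{\mathbb{Z}}$), and Corollary~20.10.5 (pseudofinite $=$ perfect $+$ PAC $+$ $\hat{\mathbb{Z}}$ Galois group). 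You instead attempt to re-derive the key content of these theorems from scratch via ultraproducts of residue fields, the Ax-style classification of pseudofinite fields up to elementary equivalence over $K$, and the Chebotarev density theorem for function fields.

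Your outline is conceptually sound and indeed parallels the underlying proofs in Fried--Jarden, but it defers exactly the hard steps. The assertion that the complete $\mathfrak{L}_{\rm ring}(K)$-theory of a pseudofinite $F\supseteq K$ is determined by the $K$-isomorphism type of $F\cap\widetilde{K}$ is not a ``routine adaptation'' of Ax's characterisation by absolute numbers: with a base field $K$ that is not the prime field one must track Galois data over $K$ and imperfection issues carefully, and the correct statement and proof is essentially the Jarden--Kiehne theory of perfect $e$-free PAC fields (\cite[\S20.7, \S20.9, \S20.10]{FJ}). Likewise your compactness/ultrafilter step needs the finite-intersection property of the Chebotarev sets as the finite Galois extension varies, which is the same Haar-measure argument encapsulated in \cite[Theorem 20.9.3]{FJ}. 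Your approach therefore buys no simplification over the paper's; it is the same mathematics, re-derived rather than cited, and as written it leaves the central technical claim (which you do flag as the main obstacle) unproved. For the decidability you would additionally need the effectivity in \cite[Theorem 20.9.4]{FJ}, which again is exactly what the paper invokes. In short: correct strategy at the level of ideas, but the ``routine adaptation'' is precisely the substance that the paper outsources to Fried--Jarden, and it should be cited rather than re-proved.
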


\begin{proof}
Let $k$ be the constant field of $K$, fix a separating element $t\in K$,
let $\mathcal{O}_K$ be the integral closure of $k[t]$ in $K$,
and let $P\subseteq\mathbb{P}_K$ be the set of places $v$ of $K$ with $\mathcal{O}_K\subseteq \mathcal{O}_v$.
As $\mathbb{P}_K\setminus P$ is the set of poles of $t$,
and hence finite,
the $\mathfrak{L}_{\rm ring}(K)$-theory of almost all $Kv$, $v\in\mathbb{P}_K$,
equals the
$\mathfrak{L}_{\rm ring}(K)$-theory of almost all $Kv$, $v\in P$.
By \cite[Theorem 20.9.4]{FJ},
this is the set of all 
$\mathfrak{L}_{\rm ring}(K)$-sentences
for which $\{v\in P:Kv\models\varphi\}$ has Dirichlet density $1$, and it is decidable\footnote{Note
    that the theorem referenced uses $\Lring(\mathcal{O}_K)$ instead of $\Lring(K)$, but these languages are equally expressive.}.
Moreover,
by \cite[Theorems 20.9.3]{FJ}
the latter theory
is the set of $\mathfrak{L}_{\rm ring}(K)$-sentences $\varphi$ 
for which the set of $\sigma$ 
in ${\rm Aut}(\overline{K}/K)$
(which is a profinite group canonically isomorphic to the absolute Galois group $G_K$)
such that $\varphi$ holds in the fixed field of $\sigma$ has Haar measure $1$.
By \cite[Theorem 20.5.4]{FJ} (with $e=1$),
this is precisely the theory of perfect PAC fields $F$ extending $K$ with $G_F\cong\hat{\mathbb{Z}}$, the free profinite group on $e=1$ generators.
As a field $F$ is pseudofinite if and only if it is perfect and PAC with $G_F\cong\hat{\mathbb{Z}}$ (see \cite[Corollary 20.10.5]{FJ}),
this proves the claim.
\end{proof}

\begin{theorem}\label{thm:perf}
For a global function field $K$, the universal/existential $\mathfrak{L}_{\rm val}(K)$-theory of {\em  almost all} $(\hat{K}_v)^{\rm perf}$, $v\in\mathbb{P}_K$,
is the universal/existential $\mathfrak{L}_{\rm val}(K)$-theory of henselian nontrivially valued fields extending
the trivially valued field $K^{\rm perf}$
with residue field 
pseudofinite,
and it is decidable.
\end{theorem}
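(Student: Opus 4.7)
The plan is to combine Proposition \ref{prop:perf}, Lemma \ref{lem:psf}, and Lemma \ref{lem:main}(b). For the identification of theories, Proposition \ref{prop:perf} reduces the claim to showing that the $\mathfrak{L}_{\rm ring}(K)$-theory of almost all $(Kv)^{\rm perf}$ coincides with the theory of pseudofinite fields extending $K$. Since each $Kv$ is a finite field, hence perfect, one has $(Kv)^{\rm perf}=Kv$, and this is exactly the content of Lemma \ref{lem:psf}. Every pseudofinite field extending $K$ is perfect and so canonically contains $K^{\rm perf}$ (by uniqueness of purely inseparable subextensions inside a perfect field), so the residue-field class in Proposition \ref{prop:perf}(3) is precisely the class of pseudofinite fields extending $K^{\rm perf}$, yielding the stated axiomatisation.

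For decidability I would invoke Lemma \ref{lem:main}(b) with $C=K^{\rm perf}$, $T=T_2$, and $R$ the $\mathfrak{L}_{\rm ring}(K^{\rm perf})$-theory of pseudofinite fields extending $K^{\rm perf}$. The theory $T_2$ is computably enumerable once $K^{\rm perf}$ carries a computable atomic diagram, which is built on top of that of $K$ by representing each element as a pair $(c,n)$ standing for the unique $p^n$-th root of $c\in K$; equality and arithmetic are then decided by computation in $K$. The inclusion $R\supseteq R_0$ holds since any pseudofinite $F\supseteq K^{\rm perf}$ arises as the residue field of the model $(F(\!(t)\!),v_t)$ of $T_2$.

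The main obstacle is the decidability of $R$, which I would reduce to Lemma \ref{lem:psf} as follows. Given an $\mathfrak{L}_{\rm ring}(K^{\rm perf})$-sentence $\varphi$, each occurring parameter $a\in K^{\rm perf}$ satisfies $a^{p^n}=c$ for some $c\in K$ and some computable $n\geq 0$. Replacing each such $a$ by a fresh variable $y$ constrained by $y^{p^n}=c$ and existentially quantified at the outermost scope yields an $\mathfrak{L}_{\rm ring}(K)$-sentence $\tilde\varphi$ that is equivalent to $\varphi$ in every perfect field, since such $y$ exists and is unique there. Hence $\varphi\in R$ if and only if $\tilde\varphi$ lies in the $\mathfrak{L}_{\rm ring}(K)$-theory of pseudofinite fields extending $K$, which is decidable by Lemma \ref{lem:psf}. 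Lemma \ref{lem:main}(b) then gives decidability of $T_2(R)_{\forall/\exists}$ in $\mathfrak{L}_{\rm val}(K^{\rm perf})$, and the $\mathfrak{L}_{\rm val}(K)$-fragment is extracted by the computable check that a sentence uses only parameters from $K$.
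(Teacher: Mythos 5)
Your proof is correct and follows essentially the same route as the paper: Proposition \ref{prop:perf} plus Lemma \ref{lem:psf} for the identification of theories, and Lemma \ref{lem:main}(b) for decidability, where the hypothesis $(\ast)$ for $T_2$ comes from Proposition \ref{lem:TC}, which you leave implicit but clearly rely on. The additional care you take over a computable presentation of $K^{\rm perf}$ (normalised pairs $(c,n)$) and over eliminating $\mathfrak{L}_{\rm ring}(K^{\rm perf})$-parameters via existentially quantified $p^n$-th-root constraints usefully fills in details the paper treats as routine.
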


\begin{proof}
As each $Kv$ is finite and therefore $Kv=(Kv)^{\rm perf}$,
by Lemma \ref{lem:psf}
the $\mathfrak{L}_{\rm ring}(K)$-theory of almost all $(Kv)^{\rm perf}$
is the theory of pseudofinite fields containing $K$.
Therefore, the first part follows from Proposition \ref{prop:perf}.
The decidability follows from Proposition \ref{lem:TC}, Lemma \ref{lem:psf} and Lemma \ref{lem:main}(b).
\end{proof}

Combining this with
the equivalence of (1) and (2) in Proposition \ref{prop:perf}
proves Theorem \ref{thm:intro_noparam}(c).

\begin{remark}\label{rem:density}
As noted in the introduction, we could always interpret ``almost all'' in the sense of Dirichlet density~$1$
(as defined for example in \cite[Chapter 6.3]{FJ} or \cite[p.~123]{Rosen}).
Indeed, for Lemma \ref{lem:psf}, this is explicit in its proof.
This then carries over to Theorem \ref{thm:Eax}
if in the proof of Proposition \ref{prop:Eax} one lets $P_0$ run over sets of Dirichlet density $0$
rather than only over finite sets.
It also carries over to Theorem \ref{thm:perf} (and similarly the theorems in the following two sections)
if in the proof of Proposition \ref{prop:perf} one lets $\mathcal{U}$ run only over those
ultrafilters on $\mathbb{P}_K$ that contain the filter of subsets of Dirichlet density $1$
rather than over all nonprincipal ultrafilters.
\end{remark}

\begin{remark}\label{rem:all_implies_individual}
We are not able to prove that the universal/existential $\mathfrak{L}_{\rm val}(K)$-theory of {\em all} $(\hat{K}_v)^{\rm perf}$ is decidable.
This is, however, not surprising:  
Such an algorithm would, in particular, be able to 
solve the open problem of
deciding the existential $\mathfrak{L}_{\rm val}(K)$-theory of each individual $(\hat{K}_{v_0})^{\rm perf}$: If we fix a $z\in K$ whose only zero is $v_0$,
then $(\hat{K}_{v_0})^{\rm perf}\models\varphi$ if and only if
$z^{-1}\notin\mathcal{O}\rightarrow\varphi$
holds in all $(\hat{K}_v)^{\rm perf}$.
\end{remark}

\section{Existential $\mathfrak{L}_{\rm ring}$-theory with parameters}
\label{sec:5}

\noindent
We now return to the completions of $K$ and allow arbitrary parameters from $K$,
where we can obtain unconditional results when we restrict the language to $\mathfrak{L}_{\rm ring}$ (rather than $\mathfrak{L}_{\rm val}$). 
Here it will be crucial that a pseudofinite field $L$ is PAC, i.e.~every geometrically integral $L$-variety has $L$-rational points, cf.~the proof of Lemma~\ref{lem:psf}.

Let $T_3\supseteq T_1$ be the $\mathfrak{L}_{\rm val}(C)$-theory
of henselian nontrivially valued fields $(F,v)$ extending the trivially valued field $C$
such that $F/C$ is separable and every geometrically integral smooth $C$-variety has $Fv$-rational points.

{
\begin{proposition}\label{prop:Lring-params-henselian-comparison}
If $(F,v)$ and $(F',v')$ are henselian nontrivially valued separable extensions of a trivially valued field $C$ 
such that $Fv$ and $F'v'$
have the same universal/existential $\mathfrak{L}_{\rm ring}(C)$-theory
and every geometrically integral smooth affine $C$-variety has
a rational point over both $Fv$ and $F'v'$,
then $F$ and $F'$
have the same universal/existential $\mathfrak{L}_{\rm ring}(C)$-theory,
i.e.~{\rm(\ref{eqn:star})} holds for $T=T_3$ with the weaker conclusion for $\mathfrak{L}_{\rm ring}(C)$ instead of $\mathfrak{L}_{\rm val}(C)$.
\end{proposition}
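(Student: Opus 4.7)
The plan is to follow the saturation-and-embedding strategy of Proposition~\ref{lem:TC}, replacing the embedding into a power-series field (which crucially used perfectness) with a more geometric argument: reduce an existential sentence over $C$ to the existence of a rational point on a smooth geometrically integral affine $C$-variety, then use the PAC-style hypothesis on $F'v'$ together with henselianity of $(F',v')$ to obtain a rational point in $F'$. Since universal/existential $\Lring(C)$-sentences are Boolean combinations of existential ones, it suffices to show ${\rm Th}_\exists(F) = {\rm Th}_\exists(F')$ and by symmetry ${\rm Th}_\exists(F) \subseteq {\rm Th}_\exists(F')$. After replacing $(F',v')$ by a sufficiently saturated elementary $\Lval(C)$-extension (which also yields a sufficiently saturated residue $\Lring(C)$-extension), the hypothesis ${\rm Th}_\exists(Fv) \subseteq {\rm Th}_\exists(F'v')$ produces a $C$-embedding $\iota\colon Fv \to F'v'$.

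An existential $\Lring(C)$-sentence true in $F$ asserts $V(F)\neq\emptyset$ for some affine $C$-variety $V$; fix such an $F$-rational point $\bar{a}$ and let $\mathfrak{p}$ be the kernel of the coordinate map $C[V]\to F$ sending the coordinates to $\bar{a}$. The closed subvariety $V_0 = V(\mathfrak{p})$ is integral with function field $K_0 \subseteq F$, and $K_0/C$ is separable because $F/C$ is. Let $C_1$ be the relative algebraic closure of $C$ in $K_0$; it is a finite separable extension of $C$ contained in $F$, and $K_0/C_1$ is a regular extension. After shrinking $V_0$ to its smooth locus (dense by separability of $K_0/C$) and base-changing to $C_1$, we obtain a smooth geometrically integral affine $C_1$-variety $V_1$ with $V_1(F)\neq\emptyset$, and its Weil restriction $W := \mathrm{Res}_{C_1/C}V_1$ is a smooth geometrically integral affine $C$-variety equipped with a $C$-morphism to $V$, with $W(F) \supseteq V_1(F)\neq\emptyset$ via the canonical inclusion $C_1 \subseteq F$. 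The hypothesis then gives $W(F'v')\neq\emptyset$, and smoothness of $W$ over $C\subseteq\mathcal{O}_{v'}$ combined with Hensel's lemma lifts this to $W(F')\neq\emptyset$. To descend to $V(F')$ note that $C_1\hookrightarrow F'v'$ via $\iota$ lifts, by henselianity and separability of $C_1/C$, to a $C$-embedding $C_1\hookrightarrow F'$; using this to split $F'\otimes_C C_1 \twoheadrightarrow F'$ as $C_1$-algebras, an $F'$-point of $W=\mathrm{Res}_{C_1/C}V_1$ yields a point in $V_1(F')$, hence in $V(F')$, as required.

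The most delicate part is executing the geometric reduction compatibly with all the structure simultaneously: one needs the finite separable extension $C_1/C$ to sit coherently inside both $F$ and $F'$ (the latter via $\iota$ together with Hensel), smoothness of $W$ over $C$ to run the henselian lift, and geometric integrality of $W$ as a $C$-variety (not merely a $C_1$-variety) to apply the PAC-style hypothesis. The assumption that $C$ has imperfect exponent at most one is what guarantees that all these reductions stay within the realm of finite separable base-extensions, so that the classical tools on Weil restriction, generic smoothness and henselian lifting apply directly; for more imperfect $C$ one would have to control purely inseparable contributions appearing in the decomposition of $V_0\times_C \overline{C}$ and in the residue embedding $\iota$, which is precisely the obstruction avoided by the hypothesis.
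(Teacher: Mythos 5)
Your argument is correct in essence but takes a genuinely different route from the paper. The paper first performs a global reduction: it replaces $C$ by the separable algebraic closure $D$ of $C$ in $Fv$ (identified with the one in $F'v'$ via \cite[Lemma 20.6.3(b)]{FJ}, and embedded into $F$, $F'$ by henselianity), verifying via Weil restriction that the rational-point hypothesis survives the enlargement; with $C$ then separably closed in the residue fields and $F/C$, $F'/C$ separable, it concludes $F/C$ and $F'/C$ are regular and simply invokes \cite[Lemma 4.2]{TwoExamples} plus henselian lifting. You instead keep $C$ fixed and, for each existential sentence witnessed by an $F$-point, build the integral model $V_0$, the finite separable extension $C_1$, the geometrically integral smooth affine $C_1$-scheme $V_1$ and its Weil restriction $W$ by hand, which amounts to re-proving the cited lemma inline; and you pay for avoiding the global reduction by needing a saturated elementary extension and an actual embedding $\iota\colon Fv\to F'v'$ (to transport $C_1$ into $F'$), where the paper only needs the isomorphism of relative algebraic closures. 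Both approaches hinge on Weil restriction and henselian lifting, so the choice is one of packaging. One small inaccuracy: your parenthetical claim that $W(F)\supseteq V_1(F)$ "via the canonical inclusion $C_1\subseteq F$" is wrong as stated — the adjunction gives a natural map $W(F)=V_1(F\otimes_C C_1)\to V_1(F)$ (induced by the $C_1$-algebra surjection $F\otimes_C C_1\to F$), not the other way, and one cannot in general split that surjection as a $C_1$-algebra map; but this is harmless since you never use $W(F)\neq\emptyset$ — the hypothesis gives $W(F'v')\neq\emptyset$ directly. You might also note that, as you have written it, the imperfect-exponent assumption does not visibly enter (regularity of $K_0/C_1$ already follows from separability of $K_0/C$ and $C_1$ being relatively algebraically closed), whereas the paper uses it in its separable-closure reduction; this is not an error, just a point where your final paragraph overstates where the hypothesis is doing work.
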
}
\begin{proof}
We claim that by enlarging $C$ we can assume without loss of generality that $C$ is separably closed in $Fv$ and $F'v'$.
Indeed, let $D$ be the separable algebraic closure of $C$ in $Fv$.
Since
$Fv$ and $F'v'$ have the same universal/existential $\Lring(K)$-theory,
the relative algebraic closure of $C$ in $Fv$ is isomorphic to the one in $F'v'$ \cite[Lemma 20.6.3(b)]{FJ},
and so we can choose an isomorphism  of $D$ with the separable algebraic closure of $C$ in  $F'v'$.
As $v$ is trivial on $C$, we can identify the separable algebraic closure of $C$ in $F$ with a subfield of $D$ via the residue map,
and since $v$ is henselian, this subfield is actually all of $D$ (cf.~\cite[Lemma 2.3]{AF16}). 
Similarly we identify $D$
with the separable algebraic closure of $C$ in $F'$.
Note that $F$ and $F'$ remain separable over $D$.
Finally, every geometrically integral smooth affine $D$-variety has a rational point over $Fv$ and $F'v'$:
Such a $D$-variety is the base change of a geometrically integral smooth affine $C'$-variety $V$ for a finite subextension $C'/C$ of $D/C$. 
The Weil restriction $W={\rm res}_{C'/C}(V)$ is a geometrically integral smooth affine $C$-variety (see \cite[Prop.~4.9]{Scheiderer} and
\cite[Lemma 1.10]{Diem}),
hence $W(Fv)\neq\emptyset$ by assumption.
From the morphism $W_{C'}\rightarrow V$ (see \cite[4.2.5]{Scheiderer}) 
we conclude that also $V(Fv)\neq\emptyset$,
and similarly $V(F'v')\neq\emptyset$.

So from now on assume that $C$ is separably closed in $Fv$ and $F'v'$.
We already argued that then it
is also separably closed in $F$.
Since $F/C$ is in addition separable,
it is therefore regular.
Similarly, $F'/C$ is regular.
Therefore, by \cite[Lemma 4.2]{TwoExamples}, $F$ and $F'$ have the same existential $\Lring(C)$-theory, and therefore the same universal/existential $\Lring(C)$-theory, 
as soon as every geometrically integral smooth affine $C$-variety $V$ has an $F$-rational point if and only if it has an $F'$-rational point.
But every such $V$ has an $Fv$-rational point by assumption,
and this $Fv$-rational point lifts to an $F$-rational point of $V$ since $v$ is henselian
(see for instance \cite[\S2.3 Prop.~5]{BLR}),
and similarly to an $F'$-rational point. 
This concludes the proof.
\end{proof}

\begin{lemma}\label{lem:almost_all_hatKv}
Let $K$ be a global function field.
The $\mathfrak{L}_{\rm val}(K)$-theory of almost all $\hat{K}_v$
contains
the $\mathfrak{L}_{\rm val}(K)$-theory of 
henselian nontrivially valued fields
separable over the trivially valued field $K$
with residue field pseudofinite.
\end{lemma}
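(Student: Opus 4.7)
The plan is to apply Łoś's theorem to ultraproducts of the $\hat{K}_v$, using the elementary observation that a sentence $\varphi$ holds in almost all $\hat{K}_v$ iff $\{v\in\mathbb{P}_K:\hat{K}_v\models\varphi\}$ belongs to every nonprincipal ultrafilter on $\mathbb{P}_K$, equivalently every ultraproduct $(F,w)=\prod_{v\in\mathbb{P}_K}(\hat{K}_v,\hat{v})/\mathcal{U}$ with $\mathcal{U}$ nonprincipal satisfies $\varphi$. So it is enough to verify that such an $(F,w)$ is a henselian nontrivially valued field extending the trivially valued $K$, that $F/K$ is separable, and that $Fw$ is pseudofinite; for then any $\varphi$ from the right-hand theory holds in $(F,w)$ automatically.

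First I would dispose of the routine items. Henselianity and nontriviality of $\hat{v}$ are both expressible by $\mathfrak{L}_{\rm val}$-sentences (the former by a schema), so Łoś transfers them to $(F,w)$. Triviality of $w$ on $K$ follows because each $a\in K^\times$ has only finitely many poles and zeros in $\mathbb{P}_K$, so $\{v:v(a)=0\}$ is cofinite and belongs to every nonprincipal $\mathcal{U}$. The residue field $Fw$ is canonically $\prod_v Kv/\mathcal{U}$, an ultraproduct of finite fields whose sizes $|Kv|$ are unbounded by Lemma~\ref{lem:resfields}; it is therefore infinite and elementarily equivalent to a finite field, hence pseudofinite.

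The hardest step will be the separability of $F/K$. If $\mathrm{char}(K)=0$ there is nothing to show. In characteristic $p$, I would choose a separating transcendence basis element $t$ of $K/\mathbb{F}_q$, which is then a $p$-basis of $K$ over $\mathbb{F}_p$, and reduce the claim to $t\notin F^p$. By Łoś this follows from $t\notin\hat{K}_v^p$ for $\mathcal{U}$-many $v$, which I would establish for \emph{every} $v$ using the standard fact that $\hat{K}_v/K$ is separable: the local ring $\mathcal{O}_v$ is essentially of finite type over the perfect field $\mathbb{F}_q$, hence excellent, so the formal fibre $\hat{K}_v=\hat{\mathcal{O}}_v\otimes_{\mathcal{O}_v}K$ is geometrically regular, equivalently separable, over $K$; since $t$ is a $p$-basis of $K$, this forces $t\notin\hat{K}_v^p$. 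The first-order statement $\forall y\,(y^p\neq t)$ then transfers to $F$ by Łoś, yielding $t\notin F^p$ and completing the verification.
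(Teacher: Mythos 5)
Your proposal is correct and amounts to the same verification the paper performs, just packaged through ultraproducts rather than stated directly in terms of models of the common theory (a reformulation the paper itself uses elsewhere, e.g.\ in Proposition~\ref{prop:perf}). The checks of henselianity, nontriviality, triviality on $K$, and pseudofiniteness of the residue field are essentially the paper's. The one real divergence is the separability of $\hat{K}_v/K$: you invoke excellence of $\mathcal{O}_v$ and geometric regularity of formal fibres, whereas the paper gives a one-line elementary argument --- a uniformiser $\pi\in K$ for $v$ has $v(\pi)=1\notin p\mathbb{Z}$, so $\pi\notin K^p$ and $\pi\notin\hat{K}_v^p$, i.e.\ $\pi$ is a $p$-basis element of both $K$ and $\hat{K}_v$, which by MacLane's criterion gives separability directly. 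Your excellence argument is valid but considerably heavier than what is needed, and you could have avoided it by the same $p$-basis observation that you already rely on to pass from separability to $t\notin\hat{K}_v^p$. One small inaccuracy to fix: a nonprincipal ultraproduct of finite fields of unbounded size is infinite, hence cannot be ``elementarily equivalent to a finite field''; what you mean (and what the argument needs) is that it is an infinite \emph{model of the theory of finite fields}, which is the definition of pseudofinite.
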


\begin{proof}
Each $(\hat{K}_v,\hat{v})$ is henselian nontrivially valued.
Furthermore, each $\hat{K}_v$ is separable over $K$,
since a uniformiser $\pi \in K$ for $v$ is a $p$-basis for both $K$ and $\hat{K}_v$.
For each $x\in K^\times$, $v(x)\neq0$ for only finitely many $v\in\mathbb{P}_K$,
so models of the $\mathfrak{L}_{\rm val}(K)$-theory of almost all $\hat{K}_v$ have valuation trivial on $K$.
Finally, such a model has pseudofinite residue field by Lemma \ref{lem:psf}.
\end{proof}

\begin{theorem}
\label{prop:Lring-params-aa}
  Let $K$ be a global function field.
  The universal/existential $\Lring(K)$-theory of almost all $\hat{K}_v$, $v\in\mathbb{P}_K$,
  is the universal/existential $\Lring(K)$-part of the
  $\Lval(K)$-theory of henselian nontrivially valued fields separable over the trivially valued field $K$ with residue field pseudofinite,
  and it is decidable.
\end{theorem}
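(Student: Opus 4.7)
The plan is to apply Lemma \ref{lem:main} in its ``moreover'' form with $C=K$ and $T=T_3$, closely paralleling Propositions \ref{prop:Eax} and \ref{prop:perf}. A global function field $K$ has imperfect exponent at most $1$ (equal to $1$ in positive characteristic since $[K:K^p]=p$, and $0$ in characteristic zero), so Proposition \ref{prop:Lring-params-henselian-comparison} supplies the hypothesis $(\ast)$ for $T_3$ in its weakened $\Lring(C)$-form. The first thing to observe, however, is that individual completions $(\hat{K}_v,\hat{v})$ are \emph{not} themselves models of $T_3$, since their residue fields $Kv$ are finite and hence fail to be PAC over $K$. I would therefore route the argument through ultraproducts.

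For each non-principal ultrafilter $\mathcal{U}$ on $\mathbb{P}_K$, set $(K_\mathcal{U},v_\mathcal{U})=\prod_{v}(\hat{K}_v,\hat{v})/\mathcal{U}$. By {\L}o\'{s}, a first-order sentence holds in almost all $\hat{K}_v$ if and only if it holds in every such ultraproduct, so it suffices to describe $\bigcap_{\mathcal{U}}{\rm Th}_{\forall/\exists,\Lring(K)}(K_\mathcal{U})$. Each $(K_\mathcal{U},v_\mathcal{U})$ does lie in $T_3$: henselianity, nontriviality of $v_\mathcal{U}$ (witnessed by a class of uniformisers), separability of $K_\mathcal{U}/K$ and triviality of $v_\mathcal{U}$ on $K$ all follow componentwise from Lemma \ref{lem:almost_all_hatKv}, and the residue field $K_\mathcal{U}v_\mathcal{U}=\prod_v Kv/\mathcal{U}$ is a pseudofinite extension of $K$ by Lemma \ref{lem:psf}, hence PAC, so every geometrically integral smooth $K$-variety has a rational point there. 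The ``moreover'' forms of Lemma \ref{lem:main}(a) and (c), applied with the deductively closed $\Lring(K)$-theories $R_\mathcal{U}={\rm Th}(K_\mathcal{U}v_\mathcal{U})$, then yield
\[
  \bigcap_{\mathcal{U}}{\rm Th}_{\forall/\exists,\Lring(K)}(K_\mathcal{U})\;=\;T_3\Bigl(\bigcap_{\mathcal{U}}R_\mathcal{U}\Bigr)_{\forall/\exists,\Lring(K)}.
\]

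A second application of {\L}o\'{s} together with Lemma \ref{lem:psf} identifies the inner intersection with the $\Lring(K)$-theory of pseudofinite fields extending $K$. Because any pseudofinite extension of $K$ is in particular PAC, the ``rational point'' axioms built into $T_3$ become redundant once the residue field is required to be pseudofinite, so augmenting $T_3$ by this residue field condition produces exactly the $\Lval(K)$-theory of henselian nontrivially valued separable extensions of the trivially valued field $K$ with pseudofinite residue field; taking the $\forall/\exists$-part in $\Lring(K)$ gives the first claim. Decidability follows from the ``moreover'' form of Lemma \ref{lem:main}(b): $T_3$ is computably enumerable (its axioms are indexed by a computable enumeration of geometrically integral smooth $K$-varieties) and the theory of pseudofinite fields extending $K$ is decidable by Lemma \ref{lem:psf}. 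The main technical point I expect to have to check carefully is the verification that each ultraproduct really lies in $T_3$ --- in particular, that separability of $K_\mathcal{U}/K$ does transfer (using that a uniformiser in $K$ serves as a uniform $p$-basis for each $\hat{K}_v$ over $K$), so that Proposition \ref{prop:Lring-params-henselian-comparison} may be invoked without further hypotheses when passing to the ultrapower.
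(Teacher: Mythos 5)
Your proof is correct and follows essentially the same route as the paper: passing to nonprincipal ultraproducts via {\L}o\'{s}, verifying each ultraproduct is a model of $T_3$ using Lemma \ref{lem:almost_all_hatKv} and PAC-ness of pseudofinite fields, then applying the ``moreover'' form of Lemma \ref{lem:main}(a,c) with Proposition \ref{prop:Lring-params-henselian-comparison} and concluding with Lemma \ref{lem:psf}. One small slip in an aside: the reason the individual $(\hat{K}_v,\hat{v})$ are not models of $T_3$ is primarily that $\hat{v}$ restricts to the nontrivial valuation $v$ on $K$ (so $T_0$ already fails), and ``PAC over $K$'' is not really meaningful since $Kv$ is not even an extension of $K$; this does not affect the argument, which correctly routes through the ultraproducts.
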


\begin{proof}
Let $C=K$.
The $\mathfrak{L}_{\rm val}(K)$-theory of almost all $\hat{K}_v$ is the 
$\mathfrak{L}_{\rm val}(K)$-theory
of nonprincipal ultraproducts
$(K_\mathcal{U},v_\mathcal{U})=\prod_{v\in\mathbb{P}_K}(\hat{K}_v,\hat{v})/\mathcal{U}$.
By Lemma \ref{lem:almost_all_hatKv},
every such $(K_\mathcal{U},v_\mathcal{U})$ is a model of $T_3$:
Indeed, pseudofinite fields are PAC,
so even every geometrically integral 
$K_\mathcal{U}v_\mathcal{U}$-variety has a
$K_\mathcal{U}v_\mathcal{U}$-rational point.
So by Proposition \ref{prop:Lring-params-henselian-comparison} and Lemma \ref{lem:main}(a,c), 
$$
 \bigcap_\mathcal{U}{\rm Th}_{\forall/\exists}(K_\mathcal{U})=
 \bigcap_\mathcal{U} T_3({\rm Th}(K_\mathcal{U}v_\mathcal{U}))_{\forall/\exists} = 
 T_3(\bigcap_\mathcal{U} {\rm Th}(K_\mathcal{U}v_\mathcal{U}))_{\forall/\exists},
$$
where now $\forall/\exists$ denotes universal/existential $\Lring(K)$-consequences.
As $\bigcap_\mathcal{U}{\rm Th}(K_\mathcal{U}v_\mathcal{U})=\bigcap_\mathcal{U}{\rm Th}(\prod_{v\in\mathbb{P}_K}Kv/\mathcal{U})$
is the $\mathfrak{L}_{\rm ring}(K)$-theory
of almost all $Kv$,
which by Lemma \ref{lem:psf} is the
$\mathfrak{L}_{\rm ring}(K)$-theory
of pseudofinite fields containing $K$,
the first claim follows.
The decidability follows
from Lemma \ref{lem:main}(b) and Lemma \ref{lem:psf}.
\end{proof}

This in particular proves Theorem \ref{thm:intro_noparam}(d).

\begin{remark}
  Unlike in the other sections, it is essential here that the theory of almost all residue fields of a global function field contains the theory of PAC fields.
  The assumption on rational points in Proposition \ref{prop:Lring-params-henselian-comparison} could be dropped if we were willing to assume some variant of resolution of singularities,
  but under \Rfour\ we obtain stronger results in any case (see Section \ref{sec:fullparam}).
\end{remark}

\begin{remark}
Similar to
Remark \ref{rem:all_implies_individual},
we are not able to prove the decidability of the universal/existential $\mathfrak{L}_{\rm ring}(K)$-theory of {\em all} $\hat{K}_v$ as this would imply the decidability of the existential $\mathfrak{L}_{\rm ring}(K)$-theory of each individual $\hat{K}_{v_0}$
(instead of $z^{-1}\notin\mathcal{O}$  use a universal/existential $\mathfrak{L}_{\rm ring}(K)$-sentence that determines the size of the relative algebraic closure of $k$ in the field, and the intersection of the field with a suitably large finite extension of $K$),
which in turn would solve the open problem of deciding the
existential $\mathfrak{L}_{\rm val}(K)$-theory of $\hat{K}_{v_0}$,
as the valuation ring in $\hat{K}_{v_0}$
is definable both by an existential and by a universal $\mathfrak{L}_{\rm ring}(K)$-formula,
cf.~\cite[Lemma 4.9]{ADF23}.

On the other hand it seems unlikely that the decidability of the universal/existential $\mathfrak{L}_{\rm ring}(K)$-theory
of almost all $\hat{K}_v$ directly implies the decidability of the 
the universal/existential $\mathfrak{L}_{\rm val}(K)$-theory
of almost all $\hat{K}_v$, cf.~Remark \ref{rem:define_valuation}.
The relevant questions here are:
\end{remark}

\begin{question}
For a global function field $K$,
are there a universal and an existential $\mathfrak{L}_{\rm ring}(K)$-formula that define the valuation ring in almost all $\hat{K}_v$?
Is there even any universal/existential formula that does this?
\end{question}

\section{Existential $\Lval$-theory with parameters}
\label{sec:fullparam}

\noindent
In this final section we return to $\Lval$ instead of $\Lring$.
We will obtain results conditional on the following weak consequence of resolution of singularities from \cite[p.~2016]{ADF23}, mentioned already in the introduction:
\begin{enumerate}
\item[(R4)] \label{R4} Every field $L$
 which is existentially closed in $L(\!(t)\!)$
is existentially closed in every extension $F/L$ for which there exists a valuation $v$
on $F$ trivial on $L$ with residue field $Fv = L$.
\end{enumerate}
{
In this work we will use \Rfour\ only through results in \cite{ADF23}
that are proven conditionally on it, and therefore we refer the reader to \cite[Proposition 2.3]{ADF23} for how \Rfour\ would follow from resolution of singularities, and to \cite[Remark 2.4]{ADF23}
for fields $L$ for which it is known to hold.
We will make use in particular of the following two main results that are proven conditionally on \Rfour:

\begin{theorem}[{\cite[Proposition 4.11, Corollary 4.16]{ADF23}}]\label{thm:ADF23}
Assume \Rfour.
\begin{enumerate}[$(a)$]
\item Let $(C,u)$ be an equicharacteristic valued field with $uC=\mathbb{Z}$ and $\mathcal{O}_u$ excellent, and let $\pi\in C$ with $u(\pi)=1$.
Let $(F,v)$ and $(F',v')$ be henselian extension of $(C,u)$ such that $v(\pi)$ is minimal positive in $vF$ and $v'(\pi)$ is minimal positive in $v'F'$, and such that $Fv/Cu$ and $F'v'/Cu$ are separable.
If $Fv$ and $F'v'$ have the same existential $\mathfrak{L}_{\rm ring}(Cu)$-theory,
then $(F,v)$ and $(F',v')$ have the same existential $\mathfrak{L}_{\rm val}(C)$-theory.
\item Let $C$ be a field and let 
$T$ be the theory of
equicharacteristic nontrivially valued henselian fields $(F,v)$ extending the trivially valued field $C$, such that $Fv/C$ is separable.
Then $T({\rm Th}_{\forall/\exists}(k))$ is $\exists$-$C$-complete
(cf.~Theorem \ref{thm:AF16_5.7})
for every separable extension $k/C$.
\end{enumerate}
\end{theorem}
}

Fix a field $C$ of characteristic $p>0$ and $t\in C$.
We consider the following property of a valued field $(F,v)$ with $C \subseteq F$:
\begin{enumerate}[label={\rm UR($t$)}]
\item\label{URt} For every $s \in F$, the element $t-s^p \in F$ either has non-positive value under $v$,
  or it is a uniformiser.
\end{enumerate}
Here, 
a {\em uniformiser} for $v$ is an element of $F$ of minimal positive value.
Note that property \URt\ is axiomatised by the universal $\Lval(C)$-sentence
\[ \forall s, r \big( r \not\in \mathcal{O} \rightarrow (t-s^p)r^2 \not \in \mathcal{O} \big) .\]

We let $T_4\supseteq T_1$ be the theory
of henselian nontrivially valued fields $(F,v)$
extending the trivially valued field $C$
satisfying \URt.

\begin{proposition}\label{prop:axiomatise-ur}
Assume \Rfour.
Suppose there exists a perfect subfield $C_0\subseteq C$ with $C/C_0(t)$ separable algebraic.
If $(F,v)$ and $(F',v')$ are henselian nontrivially valued fields
extending the trivially valued field $C$ satisfying \URt\
such that $Fv$ and $F'v'$ have the same universal/existential $\mathfrak{L}_{\rm ring}(C)$-theory,
then $(F,v)$ and $(F',v')$ have the same universal/existential $\mathfrak{L}_{\rm val}(C)$-theory,
i.e.~{\rm(\ref{eqn:star})} holds for $T=T_4$.
\end{proposition}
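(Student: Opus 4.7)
The proof plan is to follow the template of Propositions \ref{lem:He} and \ref{lem:TC}: establish a single-model residue-field reduction
\[ {\rm Th}_{\forall/\exists}(L, w) = T_4({\rm Th}(Lw))_{\forall/\exists} \text{ for every } (L,w) \models T_4, \]
and then apply the standard saturation-and-embedding argument together with Lemma \ref{lem:main}(a,c) to deduce \eqref{eqn:star}. As in the proof of Proposition \ref{lem:TC}, it suffices to treat the existential case; after replacing $(F', v')$ by a $|Fv|^+$-saturated elementary extension, the hypothesis ${\rm Th}_\exists(Fv) \subseteq {\rm Th}_\exists(F'v')$ yields an $\Lring(C)$-embedding $\iota\colon Fv \hookrightarrow F'v'$.

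For the residue-field reduction, I would construct, for any $\Lring(C)$-field $E$ modeling the appropriate theory, a reference model $(E^\sharp, v^\sharp) \models T_4$ with $E^\sharp v^\sharp = E$. The natural candidate is the henselisation of $E[u]_{(u)}$ with its $u$-adic valuation, viewed as an $\Lval(C)$-structure by $C_0(t) \hookrightarrow E^\sharp$, $t \mapsto \bar t + u$ (where $\bar t \in E$ is the image of $t \in C$), extended to $C$ using the separable algebraicity of $C/C_0(t)$ together with henselianity (if necessary passing to a further finite separable extension, which does not enlarge the residue field). A direct computation verifies \URt: the only $s \in E^\sharp$ with $v(t-s^p) > 0$ are those with $\bar s^p = \bar t$, and for such $s$ one uses the characteristic-$p$ identity $(\bar a + u s_1)^p = \bar a^p + u^p s_1^p$ to see that $t - s^p$ has $u$-adic valuation exactly $1$, hence is a uniformiser.

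The main obstacle, and the place where \Rfour\ enters decisively, is to prove $(L, w) \equiv_\exists (E^\sharp, v^\sharp)$ over $C$ for $(L, w) \models T_4$ with residue field $E$. The axiom \URt\ pins down any uniformiser of $(L, w)$ in terms of residue-field data and the parameter $t$, so that an existential $\Lval(C)$-sentence satisfiable in $(L, w)$ reduces, after solving recursively on the graded pieces of the valuation, to existential statements over $E$ together with information about the power-series expansion. \Rfour\ then transfers existential closure between $E$, $E(\!(u)\!)$ and any henselian valued extension of $E$ with residue field $E$, allowing the witnesses in $(L, w)$ to be matched by witnesses in $(E^\sharp, v^\sharp)$ and vice versa. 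This is precisely the mechanism underlying \cite{ADF23}, and the hypothesis that $C/C_0(t)$ be separable algebraic with $C_0$ perfect is what makes this transfer applicable in the relative setting.

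Given the reduction, the conclusion is formal: one has $(F, v) \equiv_\exists ((Fv)^\sharp, v^\sharp)$ and $(F', v') \equiv_\exists ((F'v')^\sharp, v^\sharp)$, and the embedding $\iota$ extends canonically (sending $u \mapsto u$) to an $\Lval(C)$-embedding $(Fv)^\sharp \hookrightarrow (F'v')^\sharp$, yielding ${\rm Th}_\exists(F, v) \subseteq {\rm Th}_\exists(F', v')$. The reverse inclusion follows by symmetry, and equality of the universal/existential theories by negation. The hardest step remains the $\exists$-equivalence $(L, w) \equiv_\exists (E^\sharp, v^\sharp)$: it is the technical combination of \Rfour\ with \URt\ that is the heart of the argument and the reason for the hypothesis on $C$.
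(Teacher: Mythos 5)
There is a genuine gap, and it is located precisely where you say the hard work is: the step ``$(L,w) \equiv_\exists (E^\sharp, v^\sharp)$ over $C$'' is not proven, only asserted to follow from ``the mechanism underlying \cite{ADF23}.'' But \Rfour\ as stated is a purely field-theoretic existential-closedness condition, not a tool that transfers existential $\Lval(C)$-theories between arbitrary models of $T_4$ and your reference model. The results of \cite{ADF23} that actually apply (Proposition~4.11 and Corollary~4.16) compare two given valued fields with respect to a common parameter \emph{subfield} under quite specific hypotheses, and using them requires matching uniformisers and setting up an isomorphism of parameter fields, which your sketch skips entirely. Your plan effectively asserts the residue-field reduction ${\rm Th}_{\forall/\exists}(L,w)=T_4({\rm Th}(Lw))_{\forall/\exists}$ for $T_4$ as if it were already available, when in fact that reduction is equivalent to the proposition to be proven and the paper has to establish it by hand, not cite it.

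The paper's proof proceeds differently and the differences matter. It first splits into two cases according to whether $t$ is a $p$-th power in the residue field. In the separable case ($t$ not a $p$-th power in $Fv$), it appeals directly to \cite[Cor.~4.16]{ADF23}. In the inseparable case, it chooses sections $\zeta,\zeta'$ of the residue maps over $C_0$, sets $\pi := t - \zeta(t^{1/p})^p$ and $\pi' := t - \zeta'(t^{1/p})^p$ — and here \URt\ is the key ingredient that upgrades ``positive value'' to ``uniformiser'' — and then constructs an explicit valued-field isomorphism $g\colon\zeta(D)(\pi)\to\zeta'(D)(\pi')$ fixing $t$, where $D$ is the common relative algebraic closure of $C$ in the residue fields. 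This sets up exactly the input required by \cite[Prop.~4.11]{ADF23}, which gives agreement of the $\Lval$-theories with parameters from $\zeta(D)(\pi)\cong\zeta'(D)(\pi')$. The final step, absent from your proposal, is the elimination of parameters from $C$: each $c\in C$ is identified inside $(F,v)$ as the unique root of its minimal polynomial over $C_0(t)$ lying $\pi$-adically close to $\zeta(c)$, which reduces arbitrary $\Lval(C)$-sentences to $\Lval(\zeta(D)(\pi))$-sentences. This is a genuine extra move and is where the hypothesis ``$C/C_0(t)$ separable algebraic with $C_0$ perfect'' earns its keep. Your proposal does not address how to recover the constants of $C$ in a model-theoretically uniform way, nor does it make the case distinction, nor does it actually invoke a citable form of \Rfour; so while the high-level framing is sensible, the argument as written does not close.
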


\begin{proof}
If $t$ does not have a $p$-th root in $Fv$,
  and hence no $p$-th root in $F'v'$,
  then $Fv/C$ and $F'v'/C$ are separable,
  and in this case the claim follows from 
  %\cite[Corollary 4.16]{ADF23} 
  {Theorem~\ref{thm:ADF23}(b)}
  (using \Rfour).
So let us suppose that $t$ is a $p$-th power in $Fv$ and therefore also in $F'v'$.
The identity on the perfect field $C_0$
extends to 
sections $\zeta \colon Fv \to F$ and $\zeta' \colon F'v' \to F'$ of the residue homomorphisms \cite[Proposition 4.5]{ADF23}.
 Let $s := \zeta(t^{1/p})$ and $s' := \zeta'(t^{1/p})$,
  so that $v(t-s^p) > 0$, $v'(t-s'^p) > 0$.
  By \URt, $\pi := t-s^p$ and $\pi' := t - s'^p$ are uniformisers for $v$ and $v'$, respectively.
  As $Fv$ and $F'v'$ have the same universal/existential $\mathfrak{L}_{\rm ring}(C)$-theory,
  the relative algebraic closures $D$ and $D'$ of $C$ in $Fv$ respectively $F'v'$ are isomorphic (see again \cite[Lemma 20.6.3(b)]{FJ}),
  and we fix once and for all an isomorphism and identify them.
  As $D$ has imperfect exponent at most $1$
  (since it can be no higher than that of $C$)
  the extensions $Fv/D$ and $F'v'/D$ are separable \cite[Lemma 2.7.5]{FJ}.
  The $C_0$-isomorphism $\zeta'\circ\zeta^{-1}|_{\zeta(D)}\colon\zeta(D)\rightarrow\zeta'(D)$ extends uniquely to an isomorphism $g\colon\zeta(D)(\pi)\rightarrow\zeta'(D)(\pi')$ with $g(\pi)=\pi'$.
  As $v$ is trivial on $\zeta(D)$ and $v(\pi)>0$, $v|_{\zeta(D)(\pi)}$ is the $\pi$-adic valuation on the function field $\zeta(D)(\pi)$, and similarly for $v'|_{\zeta'(D)(\pi')}$.
  Therefore, $g$ is in fact an isomorphism of valued subfields of $(F,v)$ respectively $(F',v')$.
  As $g(s)=s'$ and $g(\pi)=\pi'$,  we have $g(t)=t$,
  so $g$ is even a $C_0(t)$-isomorphism.

  Now 
  {Theorem~\ref{thm:ADF23}(a)}
   implies (using \Rfour)
  that the universal/existential $\Lval$-theories of $(F,v)$ and $(F',v')$ agree,
  with parameters from $\zeta(D)(\pi)$ and $\zeta'(D)(\pi')$ respectively,
  where the parameter fields are identified via $g$.
  Let $c \in C$ be arbitrary, and let $f \in C_0(t)[x]$ be its minimal polynomial over $C_0$.
  Now $c$ is the unique zero of $f$ in $F$ with the same residue as $\zeta(c) \in \zeta(D)$.
  This identifies $c$ as the unique witness in $(F,v)$
  of the quantifier-free $\Lval(\zeta(D)(\pi))$-formula 
  $$
   f(x)=0\wedge (x-\zeta(c))\pi^{-1}\in\mathcal{O}.
  $$ 
  It follows that the universal/existential $\Lval(C)$-theories of $(F,v)$ and $(F',v')$ agree,
  since parameters from $C$ may be eliminated in favour of parameters
  from $\zeta(D)(\pi)$ and $\zeta'(D)(\pi')$, respectively.
\end{proof}

\begin{remark}\label{rem:axiom-ur-needed}
When $Fv/C$ and $F'v'/C$ are separable, Proposition \ref{prop:axiomatise-ur} follows from \cite[Proposition 4.16]{ADF23}.
  In this case, axiom \URt\ is automatically satisfied, since $v(t-s^p) \leq 0$ for all $s$.
  Otherwise, the axiom cannot be dispensed with.
  Indeed, let $s \in F$ with $v(t-s^p) > 0$, so $t-s^p$ is a uniformiser by assumption.
  Then the extension field $F' := F(\sqrt{t-s^p})$ with the unique extension of $v$
  is totally ramified and hence also has residue field $Fv$,
  but the $\Lval(C)$-theories of $F$ and $F'$ differ
  since the universal statement \URt\ holds in $F$ but not in $F'$
  (take $s$ as above and $r : = \sqrt{t-s^p}$).
\end{remark}

\begin{remark}\label{rem:axiom-ur-differentials}
  For a valued field $(F,v)$ with $F \supseteq C$ such that $t \in \mathcal{O} := \mathcal{O}_v$,
  axiom \URt\ is equivalent to the assertion that the element $1 \otimes dt \in Fv \otimes_{\mathcal{O}} \Omega_{\mathcal{O}}$
  does not vanish, where $\Omega_{\mathcal{O}}$ is the $\mathcal{O}$-module of (absolute) Kähler differentials.
  Indeed, if \URt\ fails to hold, there exists $s \in F$ such that $t - s^p$ is in the square of the maximal ideal $\mathfrak{m}$ of $\mathcal{O}$.
  It is then easy to show that $1 \otimes dt = 1 \otimes d(t - s^p)$ vanishes in $Fv \otimes_{\mathcal{O}} \Omega_{\mathcal{O}}$.
  Assume conversely that $1 \otimes dt \in Fv \otimes_{\mathcal{O}} \Omega_{\mathcal{O}}$ vanishes.
  We consider the so-called conormal sequence of $Fv$-modules
  \[ 0 \to \mathfrak{m}/\mathfrak{m}^2 \to Fv \otimes_{\mathcal{O}} \Omega_{\mathcal{O}} \to \Omega_{Fv} \to 0 .\]
  (See \cite[Proposition 16.3]{Eisenbud}, and \cite[Corollary 16.13]{Eisenbud} for exactness at the first place;
  note that we write $\Omega_{Fv}$ and $\Omega_{\mathcal{O}}$ where Eisenbud would write $\Omega_{Fv/\mathbb{F}_p}$ and $\Omega_{\mathcal{O}/\mathbb{F}_p}$.)
  We see that $dt \in \Omega_{Fv}$ must vanish,
  and so $t$ has a $p$-th root in $Fv$.
  Let $s \in \mathcal{O}$ with $v(t - s^p) > 0$.
  We have $dt = d(t - s^p)$,
  and so $1 \otimes dt$ is the image of $(t-s^p) + \mathfrak{m}^2$
  under the first map in the conormal sequence.
  By injectivity, it follows that $t - s^p \in \mathfrak{m}^2$,
  so \URt\ is violated.

  This explains the name $\mathrm{UR}(t)$ (ur for unramified):
  The assertion is that there is no ramification witnessed by the differential $dt$.
  We will need the following elementary form of this:
\end{remark}

\begin{lemma}\label{lem:URt}
Let $K/k$ be a global function field of characteristic $p$, $t\in K$ a separating element,
and $v\in\mathbb{P}_K$.
Then \URt\ holds in $\hat{K}_v$ whenever
$v$ is not one of the finitely many places ramifying over the subfield $k(t)$.
\end{lemma}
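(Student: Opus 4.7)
The plan is to verify \URt\ directly at each place $v$ not lying over a ramified place of the finite separable extension $K/k(t)$; there are finitely many such places by standard ramification theory for function field extensions. Set $w := v|_{k(t)}$ and split into two cases.

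If $w$ is the pole of $t$, then unramifiedness forces $v(t) = -1$, while $v(s^p) = p v(s) \in p\mathbb{Z}$ can never equal $-1$. Hence $v(t - s^p) = \min(v(t), v(s^p)) \leq -1 < 0$ for every $s \in \hat{K}_v$, and \URt\ holds vacuously.

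Otherwise $w$ is the $q$-adic place for some monic irreducible $q \in k[T]$, so $q(t)$ is a uniformiser of $w$ and unramifiedness yields $v(q(t)) = 1$. Since $Kv/\mathbb{F}_p$ is separable and $\hat{K}_v$ is henselian, the Teichmüller section $\iota \colon Kv \hookrightarrow \hat{K}_v$ exists, and $\iota(\bar t) \in \mathcal{O}_v$ is a root of $q$ reducing to $\bar t$; as $k$ is perfect, $q$ is separable, so $\bar t$ is a simple root. The identity
\[ q(t) = q(t) - q(\iota(\bar t)) = (t - \iota(\bar t)) \cdot R(t, \iota(\bar t)), \]
where $R(T, S) := (q(T)-q(S))/(T-S) \in k[T, S]$ satisfies $R(\iota(\bar t), \iota(\bar t)) = q'(\bar t) \neq 0$, shows that $R(t, \iota(\bar t))$ is a unit in $\mathcal{O}_v$; combined with $v(q(t)) = 1$ this yields the key equality $v(t - \iota(\bar t)) = 1$, so that $t - \iota(\bar t)$ is itself a uniformiser.

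With this in hand, \URt\ follows: for $s \in \hat{K}_v$ with $v(t-s^p) > 0$ one must have $v(s) \geq 0$ (otherwise $v(s^p) < 0 \leq v(t)$ would force $v(t-s^p) < 0$) and $\bar s^p = \bar t$ in $Kv$. Writing $s = \iota(\bar s) + s'$ with $s' \in \mathfrak{m}_v$, the characteristic-$p$ identity $s^p = \iota(\bar s)^p + s'^p = \iota(\bar t) + s'^p$ combined with $v(s'^p) \geq p \geq 2 > 1 = v(t - \iota(\bar t))$ yields $v(t - s^p) = 1$, making $t - s^p$ a uniformiser. The principal obstacle is the key equality $v(t - \iota(\bar t)) = 1$, where both the unramifiedness of $v$ over $k(t)$ (supplying $v(q(t)) = 1$) and the separability of $q$ over the perfect field $k$ (supplying $q'(\bar t) \neq 0$) enter crucially.
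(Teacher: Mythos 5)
Your proof is correct and follows the same overall strategy as the paper's: split into the pole case (where $v(t)=-1$ and $v(t-s^p)\le -1$) and the finite place case, and in the latter exhibit an explicit element whose difference from $t$ has value exactly $1$. The implementation differs slightly: where you lift $\bar t$ via the Teichm\"uller section and show $v(t-\iota(\bar t))=1$ by factoring $q(t)=(t-\iota(\bar t))R(t,\iota(\bar t))$, the paper instead picks a polynomial $g\in k[t]$ with $\bar g^p=\bar t$ in $k[t]/(f)$ and observes that $t-g^p$ has $t$-derivative $1$, hence is separable, hence $f\mid t-g^p$ but $f^2\nmid t-g^p$, giving $v(t-g^p)=1$ directly from unramifiedness. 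The paper's version stays inside $K$ and avoids invoking the coefficient field of the completion; it also dispenses with your case split at the end, since from $t-s^p=(t-g^p)+(g-s)^p$ with $v(t-g^p)=1\ne p\,v(g-s)$ it concludes $v(t-s^p)\le 1$ in one stroke, which covers both clauses of \URt\ simultaneously. Your route is equally valid but slightly heavier, and you should note that the Teichm\"uller section you use is a genuine ring (indeed field) embedding here precisely because $\hat K_v$ is equicharacteristic complete discrete with perfect residue field, which is what makes the identity $\iota(\bar s)^p=\iota(\bar s^p)$ legitimate.
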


\begin{proof}
  This could be deduced from the characterisation given in Remark \ref{rem:axiom-ur-differentials},
  but we give a direct argument.
  Assume that $v$ does not ramify over $k(t)$.
  If $v(t) < 0$, this implies that $v(t) = -1$,
  and so for arbitrary $s \in \hat{K}_v$ we have
  $v(t-s^p) = \min(v(t), v(s^p)) \leq -1$ since $v(t) \neq v(s^p)$.
  Assume now that $v(t) \geq 0$.
  The restriction $v|_{k(t)}$ is the place of $k(t)$ associated
  to an irreducible polynomial $f \in k[t]$.
  Since $k[t]/(f)$ is perfect, there exists $g \in k[t]$
  such that $f | t - g^p$.
  As the right-hand side is a separable polynomial, $f^2 \nmid t-g^p$.
  Since $v$ does not ramify over $k(t)$, we deduce that $v(t-g^p) = 1$.
  For arbitrary $s \in \hat{K}_v$, we write $t - s^p = (t-g^p) + (g-s)^p$,
  and observe that the two summands have different value under $v$,
  so $v(t-s^p) = \min(v(t-g^p), v((g-s)^p)) \leq v(t-g^p) = 1$.
  This shows that \URt\ holds in $\hat{K}_v$.
\end{proof}

\begin{theorem}\label{thm:fullparam}
  Assume \Rfour.
  Let $K/k$ be a global function field of characteristic $p$
  and let $t \in K$ be a separating element.
  \begin{enumerate}[$(a)$]
      \item The universal/existential $\Lval(K)$-theory of {\em almost all}  $\hat{K}_v$, $v \in \mathbb{P}_K$,
  is the universal/existential theory of henselian nontrivially valued fields
  extending the trivially valued field $K$,
  satisfying \URt,
  and with residue field pseudofinite.
  \item The universal/existential $\Lval(K)$-theory of {\em all} $\hat{K}_v$, $v \in \mathbb{P}_K$,  is the universal/existential theory of henselian nontrivially valued fields $(L,w)$
  containing $K$ with residue field finite or pseudofinite
  such that \URt\ holds unless
  $w(z_{u_i})>0$ for some $i=1,\dots,r$,
  and such that ${\rm Th}_{\forall/\exists}(L,w)={\rm Th}_{\forall/\exists}(\hat{K}_v,\hat{v})$ whenever $w(z_v)>0$ for some $v \in \mathbb{P}_K$,
  where for each $v\in\mathbb{P}_K$, $z_v\in K$ is chosen to have a zero only at $v$,
  and $u_1,\dots,u_r$ is the list of places of $K$ which ramify over $k(t)$.
  \end{enumerate}
\end{theorem}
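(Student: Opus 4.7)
Both parts will be deduced from Lemma \ref{lem:main} applied to the theory $T_4$, using that Proposition \ref{prop:axiomatise-ur} supplies the transfer property (\ref{eqn:star}) under \Rfour. The perfect subfield required by Proposition \ref{prop:axiomatise-ur} is $C_0=k$, which is finite hence perfect, and $K/k(t)$ is separable algebraic since $t$ is a separating element. Lemma \ref{lem:URt} identifies the finitely many places $u_1,\dots,u_r$ at which \URt\ may fail in $\hat{K}_v$, which is precisely the finite set that appears in part (b).

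For part (a), I would follow the blueprint of Theorem \ref{thm:perf}. By {\L}o\'s's theorem, the $\Lval(K)$-theory of almost all $\hat{K}_v$ equals the $\Lval(K)$-theory of the nonprincipal ultraproducts $(K_{\mathcal{U}},v_{\mathcal{U}})=\prod_{v\in\mathbb{P}_K}(\hat{K}_v,\hat{v})/\mathcal{U}$. Each such ultraproduct is henselian and nontrivially valued, has valuation trivial on $K$ (every $x\in K^\times$ has $v(x)=0$ for cofinitely many $v$), satisfies the universal axiom \URt\ (which by Lemma \ref{lem:URt} holds in cofinitely many factors), and has pseudofinite residue field $\prod_v Kv/\mathcal{U}$, hence it models $T_4$. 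Lemma \ref{lem:main}(a,c), combined with the identification of $\bigcap_{\mathcal{U}}{\rm Th}(K_{\mathcal{U}}v_{\mathcal{U}})$ as the theory of pseudofinite fields extending $K$ (Lemma \ref{lem:psf}), then yields the axiomatization in (a).

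For part (b), I would verify both inclusions of universal/existential $\Lval(K)$-theories. Every $(\hat{K}_v,\hat{v})$ lies in the axiomatized class: the residue field is finite; by Lemma \ref{lem:URt}, \URt\ holds in $\hat{K}_v$ unless $v=u_i$ for some $i$, in which case $\hat{v}(z_{u_i})>0$ so the ``unless'' clause excuses it; and the theory clause is tautologically satisfied by taking $v$ itself. Conversely, suppose $(L,w)$ is in the axiomatized class. If $w(z_v)>0$ for some $v\in\mathbb{P}_K$, the theory clause directly gives ${\rm Th}_{\forall/\exists}(L,w)={\rm Th}_{\forall/\exists}(\hat{K}_v,\hat{v})$, which contains the common theory of all $\hat{K}_v$. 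Otherwise $w(z_v)\leq 0$ for every $v\in\mathbb{P}_K$; since $k$ is finite, $w|_k$ is trivial, so $w|_K$ is either trivial or equivalent to a unique place $v_0\in\mathbb{P}_K$, and the latter would force $w(z_{v_0})>0$, so in fact $w|_K$ is trivial. Then the \URt\ clause activates, and since $K$ embeds into $Lw$ via the residue map, the residue field $Lw$ is infinite and hence pseudofinite; thus $(L,w)$ belongs to the class axiomatized in (a), and by (a) its universal/existential theory contains the theory of almost all $\hat{K}_v$, which in turn contains the common theory of all $\hat{K}_v$. The main technical point, rather than a genuine obstacle, is this dichotomy linking ``$w(z_v)\leq 0$ for all $v$'' to the triviality of $w|_K$; once it is established, (b) reduces cleanly to (a) together with the tautological theory clause for the finitely many exceptional places.
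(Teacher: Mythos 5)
Your proof is correct and follows the same overall strategy as the paper: part (a) is the ultraproduct/Lemma \ref{lem:main} argument with $T_4$ in place of $T_3$ and Proposition \ref{prop:axiomatise-ur} in place of Proposition \ref{prop:Lring-params-henselian-comparison}, exactly as in Theorem \ref{prop:Lring-params-aa}. For part (b) your organization differs slightly from the paper's: the paper first reformulates (b) abstractly as ``a field satisfies the theory of all $\hat K_v$ iff it satisfies the theory of almost all $\hat K_v$ or has the same u/e theory as some individual $\hat K_v$'', proves this ``on general grounds'' via the finite disjunction $\varphi\vee\varphi_1\vee\dots\vee\varphi_n$, and then appeals to ``taking (a) into account'' without spelling out how the class in (b) matches this description. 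You instead verify the two theory inclusions directly, and in doing so make explicit the key dichotomy that the paper leaves implicit: if $w(z_v)\le 0$ for every $v\in\mathbb P_K$ then, since $w|_k$ is trivial and every nontrivial valuation on $K$ trivial on $k$ is equivalent to some place and hence makes some $z_v$ have positive value, $w|_K$ is trivial, so the residue field contains $K$, is infinite and hence pseudofinite, and \URt\ holds, placing $(L,w)$ in the class of (a). This is a clean, slightly more self-contained route to the same conclusion, and it does not even require the ``hard'' direction of the paper's abstract equivalence.
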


\begin{proof}
Let $C=K$.
By Lemma \ref{lem:almost_all_hatKv}
and Lemma \ref{lem:URt},
any model of the $\mathfrak{L}_{\rm val}(K)$-theory of almost all $\hat{K}_v$ 
is a model of $T_4$.
Therefore the proof of (a) works completely analogous to the proof of Theorem \ref{prop:Lring-params-aa},
replacing $T_3$ by $T_4$
and Proposition \ref{prop:Lring-params-henselian-comparison} by Proposition \ref{prop:axiomatise-ur}.

Taking (a) into account, part (b) asserts that a field $(L,w)$ satisfies the universal/existential $\Lval(K)$-theory of all $\hat{K}_v$, $v \in \mathbb{P}_K$,
if and only if $(L,w)$ either satisfies the universal/existential $\Lval(K)$-theory of \emph{almost} all $\hat{K}_v$,
or ${\rm Th}_{\forall/\exists}(L,w) = {\rm Th}_{\forall/\exists}(\hat{K}_v, \hat{v})$ for some $v \in \mathbb{P}_K$.
This is true on general grounds.
Indeed, suppose that $(L,w)$ does not satisfy the universal/existential $\Lval(K)$-theory of almost all $\hat{K}_v$,
so that $(L,w) \models \neg\varphi$ for some universal/existential $\Lval(K)$-sentence $\varphi$ which is true in all $\hat{K}_v$ except for finitely many exceptional places $v_1, \dotsc, v_n$.
If $(L,w)$ also does not satisfy the universal/existential $\Lval(K)$-theory of any individual $(\hat{K}_v, \hat{v})$, we in particular have universal/existential $\Lval(K)$-sentences $\varphi_1, \dotsc, \varphi_n$ such that $(L,w) \models \neg\varphi_i$ but $(\hat{K}_{v_i}, \hat{v}_i) \models \varphi_i$ for every $i = 1, \dotsc, n$.
Therefore $(L,w) \not\models \varphi \vee \varphi_1 \vee \dotsb \vee \varphi_n$ even though $(\hat{K}_v, \hat{v})$ satisfies the sentence on the right-hand side for every $v\in\mathbb{P}_K$,
and so $(L,w)$ is not a model of the universal/existential theory of all $(\hat{K}_v, \hat{v})$.
Conversely, if $(L,w)$ satisfies either the universal/existential $\Lval(K)$-theory of almost all $\hat{K}_v$, or of one individual $\hat{K}_v$,
it is clear that $(L,w)$ satisfies the common universal/existential $\Lval(K)$-theory of all $\hat{K}_v$.
\end{proof}

\begin{corollary}\label{cor:fullparam}
Assume \Rfour. Let $K/k$ be a global function field.
The $\mathfrak{L}_{\rm val}(K)$-theory of {\em almost all} $\hat{K}_v$ and
the
$\mathfrak{L}_{\rm val}(K)$-theory of {\em  all} $\hat{K}_v$
are decidable.
\end{corollary}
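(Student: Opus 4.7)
The plan is to treat (a) and (b) in sequence. For (a), I apply Lemma~\ref{lem:main}(b) to $T=T_4$ viewed as an $\Lval(K)$-theory. The hypothesis~(*) is supplied by Proposition~\ref{prop:axiomatise-ur}, whose assumption holds since $k\subseteq K$ is perfect and $K/k(t)$ is separable algebraic for any separating element $t$. The theory $T_4$ is computably enumerable, its axioms being effectively given, and the $\Lring(K)$-theory of pseudofinite fields containing $K$ --- which by Theorem~\ref{thm:fullparam}(a) is the residue field theory $R$ relevant here --- is decidable by Lemma~\ref{lem:psf}. Hence Lemma~\ref{lem:main}(b) yields decidability of $T_4(R)_{\forall/\exists}$, proving~(a).

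For (b), denote by $T_{\mathrm{all}}$ the target theory. I will show that both $T_{\mathrm{all}}$ and its complement (among universal/existential $\Lval(K)$-sentences) are computably enumerable. The key additional input is the decidability, under \Rfour, of the universal/existential $\Lval(K)$-theory of each individual $(\hat K_v, \hat v)$; this should follow by specialising \cite[Corollary~4.16]{ADF23} to the setting $\hat K_v \cong Kv(\!(\pi_v)\!)$, using the effective Laurent expansions of elements of $K$ to translate parameters from $K$ into the completion. Granted this, the axiomatisation of $T_{\mathrm{all}}$ from Theorem~\ref{thm:fullparam}(b) becomes effective: for each pair $(v,\psi)$ with $v\in\mathbb P_K$ and $\psi$ a universal/existential $\Lval(K)$-sentence, one decides $\hat K_v\models\psi$ and adds either $w(z_v)>0\to\psi$ or $w(z_v)>0\to\neg\psi$ accordingly, so the full axiom scheme is computably enumerable and hence so is $T_{\mathrm{all}}$.

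To finish, the complement of $T_{\mathrm{all}}$ is likewise computably enumerable: given $\varphi\notin T_{\mathrm{all}}$, there exists some $v\in\mathbb P_K$ with $\hat K_v\not\models\varphi$, and this $v$ is found by enumerating $\mathbb P_K$ (effectively, from the presentation of $K$) and testing individually. Both sides being computably enumerable yields decidability of $T_{\mathrm{all}}$. The principal technical obstacle is the individual decidability of each $\hat K_v$ under \Rfour: while constants for the uniformiser and the residue field are handled in \cite{ADF23}, accommodating arbitrary parameters from $K$ via the embedding $K\hookrightarrow\hat K_v$ requires a careful but essentially technical extension.
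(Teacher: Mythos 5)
Your proof of the \emph{almost all} case (a) matches the paper's argument exactly: Proposition~\ref{prop:axiomatise-ur} supplies condition {\rm(\ref{eqn:star})} for $T_4$, Lemma~\ref{lem:psf} gives the decidable residue-field theory, and Lemma~\ref{lem:main}(b) concludes.

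For the \emph{all} case (b) you take a genuinely different route. The paper first decides whether $\varphi$ holds in almost all $\hat K_v$; if so, it searches for a proof of $\varphi$ from the (computable) axiomatisation in Theorem~\ref{thm:fullparam}(a), reads off which finitely many constants from $K$, pseudofinite-field axioms, and uses of \URt\ that proof requires, and thereby effectively computes a finite set $P_0\cup P_1$ outside of which $\hat K_v\models\varphi$; it then tests the finitely many remaining places individually. You instead show directly that both $T_{\rm all}$ and its complement are computably enumerable: the complement by enumerating $\mathbb P_K$ and testing each $\hat K_v$, and $T_{\rm all}$ by effectively generating the full axiom scheme of Theorem~\ref{thm:fullparam}(b), which in particular requires deciding $\hat K_v\models\psi$ for each pair $(v,\psi)$. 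Your approach has the mild advantage of sidestepping the delicate point in the paper's argument that one must effectively bound, for each finite subset of pseudofinite-field axioms appearing in the proof, the exceptional set of finite fields; your approach leans instead on the explicit ``finite or pseudofinite'' axioms of Theorem~\ref{thm:fullparam}(b). Both arguments ultimately hinge on the same decisive ingredient, namely the decidability (under \Rfour) of ${\rm Th}_{\forall/\exists}(\hat K_v, \hat v)$ for each individual place $v$.

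This is where the one real gap in your write-up lies: you flag the individual decidability of each $\hat K_v$ as ``the principal technical obstacle,'' propose to derive it by ``specialising \cite[Corollary~4.16]{ADF23},'' and concede that accommodating arbitrary parameters from $K$ ``requires a careful but essentially technical extension.'' But the result you need is not Corollary~4.16 of \cite{ADF23} (that is the transfer result already used in Proposition~\ref{prop:axiomatise-ur}), and it should not be re-derived: it is \cite[Theorem~4.12]{ADF23}, which gives, under \Rfour, uniform decidability of ${\rm Th}_{\forall/\exists}(\hat K_v, \hat v)$ with parameters from $K$. Citing that theorem closes the hedge and makes your proof complete.
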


\begin{proof}
Fix a separating element $t\in K$
and let $P_0\subseteq\mathbb{P}_K$
be the finite set of places that ramify over $k(t)$.
Similar to before,
the decidability of the 
universal/existential
$\mathfrak{L}_{\rm val}(K)$-theory of {\em almost all} $\hat{K}_v$
follows immediately from
Proposition \ref{prop:axiomatise-ur}, Lemma \ref{lem:psf} and Lemma \ref{lem:main}(b).
In order to decide whether a given
universal/existential 
$\mathfrak{L}_{\rm val}(K)$-sentence $\varphi$,
holds in {\em all} $\hat{K}_v$ first
check whether it holds in {\em almost all} of them.
If yes, then by Theorem~\ref{thm:fullparam} and the completeness theorem,
there is a proof of $\varphi$ from the set of axioms
for henselian nontrivially valued fields extending the trivially valued field $C=K$ satisfying \URt\ and with residue field pseudofinite.
As this set of axioms is computable (see in particular Lemma \ref{lem:psf}), the set of consequences is computably enumerable, and so by enumerating all proofs from these axioms, we will eventually find a proof of $\varphi$.
Apart from possibly using \URt, 
which holds in $\hat{K}_v$ for all $v\notin P_0$ (Lemma \ref{lem:URt}),
this proof will involve only finitely many constants from $K$ and only finitely many axioms of pseudofinite fields,
and so we can compute a finite set $P_1\subseteq\mathbb{P}_K$ such that $\hat{K}_v\models\varphi$ for every $v\notin P_0\cup P_1$.
To determine whether $\hat{K}_v\models\varphi$ for each of the finitely many $v\in P_0\cup P_1$,
we use \cite[Theorem 4.12]{ADF23},
which (again assuming \Rfour) gives a uniform decidability of each individual ${\rm Th}_{\forall/\exists}(\hat{K}_v,\hat{v})$.
\end{proof}

This proves the remaining parts $(e)$ and $(f)$ of Theorem \ref{thm:intro_noparam}.

\begin{remark}
All algorithms presented in this paper can in fact be made uniform in the global function field $K$, similar to Corollary \ref{cor:Edec}.
For Corollary \ref{cor:fullparam}, this could take the following form:
There exists an algorithm that takes as input a prime power $q$, an absolutely irreducible $f\in\mathbb{F}_q[t,x]$ (say separable in $x$) and a 
universal/existential $\mathfrak{L}_{\rm val}(\mathbb{F}_q[t,x])$-sentence $\varphi$,
and determines whether $\varphi$ holds in all respectively almost all completions
of the global function field $K_f={\rm Quot}(\mathbb{F}_q[t,x]/(f))$,
where the parameters from $\mathbb{F}_q[t,x]$ are interpreted as their residues in $K_f$.
To formally prove  this, we would have to argue that the decidability in Lemma \ref{lem:main}(b) is uniform in $R$, and that all parts of our axiomatisations for a given $K_f$ are computable from $f$,
like in Corollary \ref{cor:fullparam}(b) the finite list of places of $K_f$ which ramify over $\mathbb{F}_q(t)$.
\end{remark}

\begin{remark}\label{rem:finite_ext}
As indicated in the introduction,
Corollary \ref{cor:intro_Ax} about completions of number fields follows from Theorem \ref{thm:intro_Ax} about completions of $\mathbb{Q}$:
Indeed, if $L=\mathbb{Q}[x]/(f)$ is a number field,
then $\mathbb{Q}_p[x]/(f)\cong \mathbb{Q}_p\otimes_\mathbb{Q}L\cong\prod_{w|p}\hat{L}_w$, where $w$ runs over the finitely many places of $L$ over $p$,
cf.~\cite[II §3 Theorem 1]{Serre}.
So an $\mathfrak{L}_{\rm ring}$-sentence $\varphi$ 
holds in all respectively almost all $\hat{L}_w$
if and only if $\varphi_f$ 
holds in all respectively almost all $\mathbb{Q}_p$,
where $\varphi_f$ is a sentence 
expressing that,
if $f=f_1\cdots f_r$ is the decomposition into irreducible factors,
$\varphi$ holds in each of the fields generated by a root of one of the $f_i$.
Note however that this does not immediately reduce Theorem~\ref{thm:intro_noparam} to the special case of the rational function field $K=\mathbb{F}_p(t)$, as 
$\varphi_f$ introduces new quantifiers on top of those of $\varphi$.
For existential $\mathfrak{L}_{\rm ring}(K)$-sentences, this problem can be overcome (see \cite[Proposition 2.1.2]{Dittmann}, and also \cite[§6]{Siegel}),
so that at least part (d) of Theorem \ref{thm:intro_noparam}
could be deduced from the special case  $K=\mathbb{F}_p(t)$,
but this is much less clear for parts (c), (e) and (f) of that theorem.
\end{remark}

\section*{Acknowledgements}
\noindent
{The authors would like to thank the referee for suggestions to improve the exposition of this work.}
A.~F.~would like to thank Sylvy Anscombe for helpful discussions at an early stage of this project.
A.~F.~was funded by the Deutsche Forschungsgemeinschaft (DFG) - 404427454.
{
Both authors would like to thank the Hausdorff Research Institute for Mathematics Bonn,
funded by the DFG under Germany's Excellence Strategy (EXC-2047/1 -- 390685813),
for its hospitality during the trimester programme ``Definability, decidability, and computability''
while this paper was revised.
}

%%% BIBLIOGRAPHY %%%
\def\bibfont{\footnotesize}
\bibliographystyle{plain}

\end{document}